\newcommand{\cA}{\mathcal A}
\newcommand{\cY}{\mathcal Y}
\newcommand{\cT}{\mathcal T}
\newcommand{\R}{\mathds R}
\newcommand{\E}{\mathds E}
\newcommand{\bL}{\mathbf{L}}
\newcommand{\Var}{\mathbb{V}}
\newcommand{\F}{\mathcal{F}}
\newtheorem{assumption}{Assumption}
\newtheorem{apptheorem}{Theorem}[section]
\newtheorem{applemma}{Lemma}[section]
\newtheorem{appremark}{Remark}[section]
\begin{document}

\title{On the SAGA algorithm with decreasing step}


\author{\name Bernard Bercu \email bernard.bercu@math.u-bordeaux.fr \\
\name Luis Fredes \email luis.fredes@math.u-bordeaux.fr \\
\name Eméric Gbaguidi \email thierry-emeric.gbaguidi@math.u-bordeaux.fr\\
\addr Institut de Mathématiques de Bordeaux\\
       Université de Bordeaux\\
       UMR 5251, 351 cours de la libération, 33405 Talence, France}

\editor{~}

\maketitle

\begin{abstract}
Stochastic optimization naturally appear in many application areas, including machine
learning. Our goal is to go further in the analysis of the Stochastic Average Gradient Accelerated (SAGA) algorithm. To achieve this, we introduce a new  $\lambda$-SAGA algorithm which interpolates between the Stochastic Gradient Descent ($\lambda=0$) and the SAGA algorithm ($\lambda=1$). Firstly, we investigate the almost sure convergence of this new algorithm with decreasing step which allows us to avoid the restrictive strong convexity and Lipschitz gradient hypotheses associated to the objective function. Secondly, we establish a central limit theorem for the $\lambda$-SAGA algorithm. Finally, we provide the non-asymptotic $\bL^p$ rates of convergence.
\end{abstract}
\begin{keywords}
  SAGA algorithm, decreasing step, almost sure convergence, asymptotic normality, non-asymptotic rates of convergence
\end{keywords}

\section{Introduction}
Our goal is to solve the classical optimization problem in $\R^d$ which can be written as
\begin{equation}\tag{$\mathcal{P}$}\label{problem1}
    \min_{x\in \R^d} f(x),
\end{equation}
where $f$ is the average of many functions,
\begin{equation}\label{problem1_eq1}
    f(x)=\dfrac{1}{N}\sum_{k=1}^N f_k(x).
\end{equation}
This type of problem is frequently encountered in statistical learning and a standard way to solve (\ref{problem1}) is to make use of the Gradient Descent algorithm. However, in a large context, this approach has a very high computational cost. This limitation has led to the development of many stochastic algorithms for optimization \citep{nguyen2018sgd,bottou2018optimization}.

These new methods have taken a major role in recent advances of the neural networks.
Our goal is to go further in the analysis of the Stochastic Gradient Descent (SGD) algorithm \citep{robbins1951stochastic} and the SAGA algorithm \citep{defazio2014saga}. The standard SGD algorithm is given for all $n\geqslant 1$, by
\begin{equation}\label{sgd}\tag{SGD}
    X_{n+1}=X_n - \gamma_n \nabla f_{U_{n+1}}(X_n)= X_n - \gamma_n (\nabla f (X_n) + \varepsilon_{n+1}),
\end{equation}
where the initial state $X_1$ is a squared integrable random vector of $\R^d$ which can be arbitrarily chosen, $\nabla f (X_n)$ is the gradient of the function $f$ calculated at the value $X_n$, 
$\varepsilon_{n+1}=\nabla f_{U_{n+1}} (X_n)- \nabla f (X_n)$ and $(U_n)$ is a sequence of independent and identically distributed random variables, with uniform distribution on $\{1,2,\dots,N\}$, which is also independent from the sequence $(X_n)$. Moreover, $(\gamma_n)$ is a positive deterministic sequence decreasing towards zero and satisfying the standard conditions
\begin{equation}\label{gamma_cond1}
    \sum_{n=1}^\infty \gamma_n = + \infty \qquad \text{and } \qquad \sum_{n=1}^\infty \gamma_n^2 < + \infty.
\end{equation}
We clearly have from (\ref{problem1_eq1}) that $(\varepsilon_{n})$ is a martingale difference sequence adapted to the filtration $(\F_n)$ where $\F_n=\sigma(X_1,\dots,X_n)$. \\
The SAGA algorithm is a stochastic variance reduction algorithm which was proposed ten years ago in the pioneering work of \cite{defazio2014saga}. It slightly differs from the SGD algorithm as it is given, for all $n\geqslant 1$, by 
\begin{equation}\label{saga}\tag{SAGA}
     X_{n+1}=X_n - \gamma_n \left(\nabla f_{U_{n+1}} (X_n) -g_{n,U_{n+1}} + \frac{1}{N}\sum_{k=1}^N g_{n,k}\right),
\end{equation}
where the initial states $X_0$ and $X_1$ are squared integrable random vectors of $\R^d$ which can be arbitrarily chosen, the initial value $g_{1,k}$ is given, for any $k=1,\dots,N$, by $g_{1,k}= \nabla f_k(X_0)$. Moreover, the sequence $(g_{n,k})$ is updated, for all $n\geqslant 1$ and $1\leqslant k\leqslant N$, as
\begin{equation}\label{def_gnk}
    g_{n+1,k}=\left\{ \begin{array}{ll}
     \nabla f_k(X_n) & \, \text{if } \, U_{n+1}=k, \vspace{0.2cm}\\
     g_{n,k} & \text{ otherwise}.
\end{array}\right.
\end{equation}

One can observe that in most of all papers dealing with the SAGA algorithm, the step size is a fixed value $\gamma$ which depends on the strong convexity constant $\mu$ and the Lipschitz gradient constant $L$ associated with $f$. This will not be the case here at all. Our work aims to investigate the almost sure convergence as well as the asymptotic normality of the SGD and SAGA algorithms with decreasing step sequence $(\gamma_n)$ satisfying (\ref{gamma_cond1}).

\vspace{0.5cm}
\noindent \textbf{Our contributions.}
\vspace{1ex}\\
The goal of this paper is to answer to several natural questions.
\begin{enumerate}[label=(\alph*)]
    \item Is it possible to study the convergence of the SAGA algorithm with decreasing step ?
    \item Can we relax the strong convexity and the Lipschitz gradient assumptions ?
    \item Can we prove a central limit theorem for our new version of the SAGA algorithm ?
     \item Is it possible to provide non-asymptotic $\bL^p$ bounds for the SAGA algorithm ?
\end{enumerate}
We shall propose positive answers to all these questions by extending \citep{defazio2014saga} in several directions. 

\vspace{0.5cm}
\noindent \textbf{Organization of the paper.}
\vspace{1ex}\\
The paper is structured as follows. Section \ref{sec:related_work} is devoted to the state of the art concerning the SGD and SAGA algorithms. In Section \ref{sec:framework}, we present our new version of the SAGA algorithm which we shall call the \ref{sagag} algorithm. Section \ref{sec:main_results} deals with the main results of the paper. We establish the asymptotic properties of our \ref{sagag} algorithm such as the almost sure convergence and the asymptotic normality. Non-asymptotic $\bL^p$ rates of convergence are also provided. In Section \ref{sec:experiments}, we illustrate our theoretical results by some numerical experiments on real dataset. All technical proofs are postponed to the appendices. 

\section{Related work} \label{sec:related_work}

The stochastic approximations, initiated by \cite{robbins1951stochastic} and \cite{kiefer1952stochastic}, have taken a major role in optimization issues. The SGD algorithm, often known as a special case of the Robbins-Monro algorithm, is probably the most standard stochastic algorithm used in machine learning. The properties of this algorithm were investigated in several studies. The almost sure convergence results were established in \citep{robbins1971convergence,bertsekas2000gradient,duflo1996algorithmes,kushner2003stochastic,roux2012stochastic,schmidt2017minimizing,bottou2018optimization}. The convergence rates were proven in \citep{kushner1979rates,pelletier1998almost,nguyen2018sgd,liu2022almost}.  
The study of the asymptotic normality of stochastic approximations also appear in several works such that \citep{sacks1958asymptotic,fabian1968asymptotic,duflo1996algorithmes,pelletier1998weak,zhang2016central}.

In a high-dimensional context, many accelerated algorithms were proposed in literature in order to improve the Robbins-Monro algorithm performances \citep{polyak1992acceleration,fercoq2016optimization,defazio2014saga,xiao2014proximal,allen2018katyusha,leluc2022sgd}. In this paper, we will focus on the SAGA algorithm first introduced by \cite{defazio2014saga} for the minimization of the average of many functions and which is a well-known variance reduction method. This algorithm is a variant of the Stochastic Average Gradient (SAG) method proposed earlier in \citep{roux2012stochastic,schmidt2017minimizing}. It uses the concept of covariates to make an unbiased variant of the SAG method that has similar performances but is easier to implement \citep{gower2020variance}. The idea behind the SAGA algorithm, is to make use of the control variates, a well-known technique in Monte-Carlo simulation designed to reduce the variance of the SGD algorithm in order to accelerate its convergence. This algorithm incorporates knowledge about gradients on all previous data points rather than only using the gradient for the sampled data point \citep{defazio2014saga,palaniappan2016stochastic}. This method requires a storage linear in $N$ \citep{gower2018tracking}. Several works have studied the convergence of the SAGA algorithm, which is undoubtedly one of the most celebrated variance reduction algorithms.

\cite{defazio2014saga} established that the SAGA algorithm converges in $\bL^2$ at exponential rate. This result has been shown by assuming that the function $f$ is $\mu$-strongly convex and with $L$-Lipschitz gradient and by considering a fixed constant step $\gamma$ which tightly depends on the unknown values $\mu$ and $L$.
The almost sure convergence of the SAGA algorithm was not investigated in \cite{defazio2014saga}.
More recently, it was shown by \cite{poon2018local} that for a fixed constant step $\gamma=1/(3L)$, $f(X_n)$ and $X_n$ both converge almost surely to $f(x^*)$ and $x^*$ respectively, where $x^*$ is the unique point of $\R^d$ such that $\nabla f(x^*)=0$. This algorithm has been also investigated in \citep{palaniappan2016stochastic,defazio2016simple,gower2018tracking,qian2019saga} and there are now many variations on the original SAGA algorithm of \cite{defazio2014saga}. For example, \cite{qian2019saga} proposed a variant of the SAGA algorithm that includes arbitrary importance sampling and minibatching schemes.

Despite a decade of research, several issues remain open on the SAGA algorithm. The choice of the step $\gamma_n$ is clearly one of them. The vast majority of the theory for the SAGA algorithm relies on a fixed constant step $\gamma$ depending on the values $\mu$ and $L$ \citep{defazio2014saga,defazio2016simple,palaniappan2016stochastic,gower2018tracking,poon2018local,gower2020variance}. However, from a practical point of view, the values $\mu$ and $L$ are unknown and there is no guarantee on the convergence results established for this algorithm. We shall propose here to make use of decreasing step sequence $(\gamma_n)$ which allows us to avoid these constraints and relax some classic assumptions such that the $\mu$-strong convexity. Moreover, to the best of our knowledge, no result about the asymptotic normality of the SAGA algorithm is available in the literature so far.

\section{The \ref{sagag} algorithm}\label{sec:framework}
We introduce in this section the \ref{sagag} algorithm which can be seen as a generalization of the SAGA algorithm. We recall below the general principle of the Monte Carlo method that gave birth to the \ref{sagag} algorithm. Suppose that we would like to estimate the expectation $\E[X]$ of a square integrable real random variable $X$. Let us also consider another square integrable real random variable $Y$ strongly positively correlated to $X$ and for which we know how to compute the expectation $\E[Y]$. Then, it is possible to find a reduced variance estimator of $\E[X]$, given by $Z_\lambda=X-\lambda (Y-\E[Y])$ with $\lambda$ in $[0,1]$ \citep{defazio2014saga,chatterji2018theory}. One can obviously see that $\E[Z_\lambda]=\E[X]$, which means that
$Z_\lambda$ is an unbiased estimator of $\E[X]$. Moreover, $\Var[Z_\lambda]=\Var[X]+\lambda^2 \Var[Y]-2\lambda \mathbb{C}ov(X,Y)$. Hence, as soon as $\mathbb{C}ov(X,Y)>0$, we can choose $\lambda$ in $[0,1]$ such that $\Var[Z_\lambda]\leqslant \Var[X]$.
Now, using this principle of variance reduction, the \ref{sagag} algorithm is defined, for all $n\geqslant 1$, by
\begin{equation}\label{sagag}\tag{$\lambda$-SAGA}
     X_{n+1}=X_n - \gamma_n \left(\nabla f_{U_{n+1}} (X_n) -\lambda\left(g_{n,U_{n+1}} - \frac{1}{N}\sum_{k=1}^N g_{n,k}\right)\right),
\end{equation}
where the initial states $X_0$ and $X_1$ are squared integrable random vectors of $\R^d$ which can be arbitrarily chosen, the parameter $\lambda$ belongs to $[0,1]$, and $(\gamma_n)$ is a positive deterministic sequence decreasing towards zero and satisfying (\ref{gamma_cond1}).

One can establish a link between the \ref{sgd}, \ref{saga} and \ref{sagag} algorithms. Indeed, the \ref{sagag} algorithm with $\lambda=0$ corresponds to the absence of variance reduction and reduces to the \ref{sgd} algorithm. Furthermore, one can easily see that we find again the \ref{saga} algorithm by choosing $\lambda=1$. The motivation to introduce and study the \ref{sagag} algorithm comes from our desire to propose a unified convergence analysis for the SGD and SAGA algorithms and to investigate what happens in the intermediate cases $0<\lambda<1$.
We shall now state the general assumptions which we will use in all the sequel.
\begin{assumption}\label{saga2_cond1}
    Assume that function $f$ is continuously differentiable with a unique equilibrium point $x^*$ in $\R^d$ such that $\nabla f(x^*) = 0$.
\end{assumption}
\begin{assumption}\label{saga2_cond2}
    Suppose that for all $x\in \R^d $ with $x\neq x^*$,
    \begin{equation*}
        \langle x-x^*,\nabla f(x)\rangle >0.
    \end{equation*}
\end{assumption}
\begin{assumption}\label{saga2_cond3}
    Assume there exists a positive constant $L$ such that, for all $x\in \R^d$,
\begin{equation*}
    \dfrac{1}{N}\sum_{k=1}^N \lVert \nabla f_{k} (x)-\nabla f_{k} (x^*) \rVert^2 \leqslant L\lVert x -x^*\rVert^2.
\end{equation*}
\end{assumption}

These assumptions are not really restrictive and they are fulfilled in many applications. One can observe that Assumption \ref{saga2_cond2} is obviously weaker than the standard hypothesis that each function $f_k$ for $1\leqslant k \leqslant N$ is $\mu$-strongly convex with $\mu>0$.
 Note also that Assumption \ref{saga2_cond3} ensures that at $x^*$, the gradient of all functions $f_k$ for any $1\leqslant k\leqslant N$, does not change arbitrarily with respect to the vector $x \in \R^d$. Such an assumption is essential for convergence of most gradient-based algorithms; without it, the gradient would not provide a good indicator of how far to move to decrease $f$. One can also observe that if each function $f_k$ has Lipschitz continuous gradient with constant $\sqrt{L_k}$, then Assumption \ref{saga2_cond3} is satisfied by taking $L$ as the average value of all $L_k$. The most interesting improvement here is that both conditions are local in $x^*$ and sufficient for all of our analysis. 

\section{Main results}\label{sec:main_results}
In this section, we present the main results of the paper. First of all, we provide an almost sure convergence analysis for the \ref{sagag} algorithm with decreasing step. After that, we establish its asymptotic normality.
Lastly, we conclude this section by focusing on non-asymptotic $\bL^p$ rates of convergence of this stochastic algorithm. 

\subsection{Almost sure convergence}
Our first result deals with the almost sure convergence of the \ref{sagag} algorithm. 

\begin{theorem}\label{sagag_th_convps}
    Consider a fixed $\lambda \in [0,1]$. Assume that $(X_n)$ is the sequence generated by the \ref{sagag} algorithm with decreasing step sequence $(\gamma_n)$ satisfying (\ref{gamma_cond1}). In addition, suppose that Assumptions \ref{saga2_cond1}, \ref{saga2_cond2} and \ref{saga2_cond3} are satisfied. Then, we have
    \begin{equation}\label{sagag_th_convps_res1}
        \lim_{n\to +\infty} X_n=x^* \qquad a.s.
    \end{equation}
   and  
   \begin{equation}\label{sagag_th_convps_res2}
        \lim_{n\to +\infty} f(X_n)=f(x^*) \qquad a.s.
    \end{equation}
\end{theorem}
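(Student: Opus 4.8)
The plan is to recast the $\lambda$-SAGA recursion as a stochastic approximation scheme with a controlled remainder and then apply a Robbins–Siegmund type almost sure convergence argument. The natural Lyapunov candidate is not $V_n = \|X_n - x^*\|^2$ alone, because the ``memory'' terms $g_{n,k}$ introduce a bias that does not vanish pointwise; instead I would work with an augmented Lyapunov function of the form $W_n = \|X_n - x^*\|^2 + c\,\gamma_{n-1}\,\frac{1}{N}\sum_{k=1}^N \|g_{n,k} - \nabla f_k(x^*)\|^2$ for a suitable constant $c>0$, mirroring the classical SAGA potential but weighted by $\gamma_{n-1}$ so that the decreasing-step structure is respected. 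First I would write $X_{n+1} - x^* = X_n - x^* - \gamma_n(\nabla f(X_n) + \xi_{n+1})$, where $\xi_{n+1} = \nabla f_{U_{n+1}}(X_n) - \lambda(g_{n,U_{n+1}} - \bar g_n) - \nabla f(X_n)$ with $\bar g_n = \frac1N\sum_k g_{n,k}$, and check that $\E[\xi_{n+1}\mid \F_n] = 0$ using that $\E[\nabla f_{U_{n+1}}(X_n)\mid\F_n] = \nabla f(X_n)$ and $\E[g_{n,U_{n+1}}\mid\F_n] = \bar g_n$.

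Next I would expand $\E[\|X_{n+1}-x^*\|^2\mid\F_n]$. The cross term gives $-2\gamma_n\langle X_n - x^*, \nabla f(X_n)\rangle$, which is $\le 0$ by Assumption \ref{saga2_cond2} and is exactly the ``descent'' we want to extract. The quadratic term $\gamma_n^2\,\E[\|\nabla f(X_n)+\xi_{n+1}\|^2\mid\F_n]$ must be bounded; here I would use $\|a+b\|^2 \le 2\|a\|^2 + 2\|b\|^2$ and then control $\E[\|\xi_{n+1}\|^2\mid\F_n]$ by $\E[\|\nabla f_{U_{n+1}}(X_n) - \nabla f_{U_{n+1}}(x^*)\|^2\mid\F_n]$ plus a term in $\frac1N\sum_k\|g_{n,k} - \nabla f_k(x^*)\|^2$, invoking Assumption \ref{saga2_cond3} to turn the first into $L\|X_n - x^*\|^2$. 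This is where the augmented potential pays off: the increase of the $g$-part of $W_n$ when the index $U_{n+1}$ is refreshed is, in conditional expectation, $\frac1N\big(\|\nabla f_{U_{n+1}}(X_n) - \nabla f_{U_{n+1}}(x^*)\|^2 - \|g_{n,U_{n+1}} - \nabla f_{U_{n+1}}(x^*)\|^2\big)$ summed over $k$, so combining the two expansions and using $\gamma_n \to 0$ together with $\sum\gamma_n^2 < \infty$ and the fact that $\gamma_{n}/\gamma_{n-1}\to 1$ (or simply $\gamma_n \le \gamma_{n-1}$ eventually), one should obtain an inequality
\[
\E[W_{n+1}\mid\F_n] \le (1 + \alpha_n) W_n - 2\gamma_n\langle X_n - x^*, \nabla f(X_n)\rangle + \beta_n
\]
with $\sum_n\alpha_n < \infty$ and $\sum_n\beta_n < \infty$, where $\beta_n = O(\gamma_n^2)$ absorbs the leftover $\gamma_n^2\|X_n-x^*\|^2$ against the $W_n$ on the left via the $(1+\alpha_n)$ factor (taking $\alpha_n$ of order $\gamma_n^2$).

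Then the Robbins–Siegmund theorem applies: $W_n$ converges almost surely to a finite random variable, and $\sum_n \gamma_n\langle X_n - x^*, \nabla f(X_n)\rangle < \infty$ a.s. In particular $\|X_n - x^*\|^2$ converges a.s.\ to some finite limit $\ell \ge 0$, and because $\sum_n\gamma_n = \infty$, we get $\liminf_n \langle X_n - x^*, \nabla f(X_n)\rangle = 0$ a.s. The final step is to upgrade this to $X_n \to x^*$: along a subsequence on which $\langle X_n - x^*, \nabla f(X_n)\rangle \to 0$, continuity of $\nabla f$ and Assumption \ref{saga2_cond2} force $X_n \to x^*$ along that subsequence, hence $\ell = 0$, hence $X_n \to x^*$ a.s.; then \eqref{sagag_th_convps_res2} follows from continuity of $f$. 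The main obstacle I anticipate is the bookkeeping in the combined potential: choosing the constant $c$ and verifying that the $g$-term contraction exactly dominates the extra variance contribution uniformly, while keeping all error terms summable despite $\gamma_n$ being only asymptotically well-behaved (one must handle the ratio $\gamma_n/\gamma_{n-1}$ carefully, or restrict to $n$ large enough that $\gamma_n \le \gamma_{n-1}$). A secondary delicate point is that $\langle X_n - x^*,\nabla f(X_n)\rangle$ may vanish only along a subsequence, so one cannot conclude $\ell=0$ without the continuity-plus-Assumption-\ref{saga2_cond2} argument on that subsequence, which is where the uniqueness of the equilibrium in Assumption \ref{saga2_cond1} is genuinely used.
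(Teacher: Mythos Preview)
Your overall strategy is the same as the paper's: augment $V_n=\lVert X_n-x^*\rVert^2$ by a weighted memory term $A_n=\tfrac1N\sum_k\lVert g_{n,k}-\nabla f_k(x^*)\rVert^2$, derive a Robbins--Siegmund inequality, and conclude $V_n\to 0$ from $\sum_n\gamma_n\langle X_n-x^*,\nabla f(X_n)\rangle<\infty$ together with $\sum_n\gamma_n=\infty$ and Assumption~\ref{saga2_cond2}. Your subsequence-plus-continuity ending is equivalent to the paper's annulus contradiction.

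There is, however, a genuine gap in the Lyapunov weight. You propose $W_n=V_n+c\,\gamma_{n-1}A_n$, whereas the paper uses $T_n=V_n+3N\,\gamma_{n-1}^{2}A_n$; the \emph{square} on the step is essential. The memory recursion is $\E[A_{n+1}\mid\F_n]=\tfrac1N\tau^2(X_n)+(1-\tfrac1N)A_n$, so with your linear weight the combined bound acquires the term
\[
c\,\gamma_n\cdot\tfrac1N\,\tau^2(X_n)\ \le\ \tfrac{cL}{N}\,\gamma_n V_n .
\]
Under Assumption~\ref{saga2_cond2} alone there is no $-\mu\gamma_nV_n$ available to cancel this, and it cannot go into $\beta_n$ (you do not yet know $V_n$ is bounded and $\sum\gamma_n=\infty$), so it must sit in the $(1+\alpha_n)W_n$ factor, forcing $\alpha_n\ge\tfrac{cL}{N}\gamma_n$. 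Then $\sum_n\alpha_n=\infty$ and Robbins--Siegmund fails. With the weight $3N\gamma_{n-1}^{2}$ this same contribution becomes $3\gamma_n^{2}\tau^2(X_n)\le 3L\gamma_n^{2}V_n$, yielding exactly the $\alpha_n=O(\gamma_n^{2})$ you were aiming for; the $A_n$ terms then balance because $3\gamma_n^{2}+3(N-1)\gamma_n^{2}=3N\gamma_n^{2}\le 3N\gamma_{n-1}^{2}$ (here one only uses that $(\gamma_n)$ is decreasing, which is assumed throughout, so the $\gamma_n/\gamma_{n-1}$ issue you flag does not arise). Two smaller points: your variance bound must also carry the constant $\theta^*=\tfrac1N\sum_k\lVert\nabla f_k(x^*)\rVert^2$, since only $\nabla f(x^*)=0$ and not the individual gradients; this just gives $\beta_n=3\gamma_n^{2}\theta^*$, still summable. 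And to pass from ``$T_n$ converges'' to ``$V_n$ converges'' you need $\gamma_{n-1}^{2}A_n\to 0$, which follows once $V_n\le T_n$ shows $(V_n)$ is bounded, hence each $\phi_{n,k}$ stays in a bounded set and $A_n$ is a.s.\ bounded.
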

\begin{proof} Recall that for all $n\geqslant 1$,
\begin{equation*}
     X_{n+1}=X_n - \gamma_n \left(\nabla f_{U_{n+1}} (X_n) -\lambda\left(g_{n,U_{n+1}} - \frac{1}{N}\sum_{k=1}^N g_{n,k}\right)\right).
\end{equation*}
Hence, the \ref{sagag} algorithm can be rewritten as
\begin{equation}\label{sagag_th_convps_eq0_a}
    X_{n+1}=X_n - \gamma_n \left(Y_{n+1} - \lambda Z_{n+1}\right),
\end{equation}
where 
\begin{equation*}
    \left\{ \begin{array}{rl}
         Y_{n+1}&=\nabla f_{U_{n+1}} (X_n), \vspace{0.3cm} \\
         Z_{n+1}&=\nabla f_{U_{n+1}} (\phi_{n,U_{n+1}}) -\dfrac{1}{N} {\displaystyle \sum_{k=1}^N} \nabla f_{k} (\phi_{n,k}),
    \end{array} \right.
\end{equation*}
and $\phi_{n,k}$ is the point such that $g_{n,k}= \nabla f_k (\phi_{n,k})$.
As $\F_n=\sigma(X_1,\dots,X_n)$ and the sequence $(U_n)$ is independent of the sequence $(X_n)$, we clearly have from (\ref{problem1_eq1}) that
\begin{equation}\label{sagag_th_convps_eq0_a2}
    \E[Y_{n+1}|\F_n]=\nabla f(X_n) \qquad \text{and} \qquad \E[Z_{n+1}|\F_n]=0 \qquad a.s.
\end{equation}
As a consequence, $(Z_n)$ is a martingale difference sequence adapted to the filtration $(\F_n)$. \\
Hereafter, define for all $n\geqslant 1$,
\[V_n=\lVert X_n-x^* \rVert^2.\]
We obtain from (\ref{sagag_th_convps_eq0_a}) that for all $n\geqslant 1$,
\begin{align*}
    V_{n+1}&=\lVert X_{n+1}-x^* \rVert^2,\\
    & =\lVert X_n-x^*  - \gamma_n (Y_{n+1} - \lambda Z_{n+1}) \rVert^2, \\
    &= V_n -2\gamma_n\langle  X_n-x^*, Y_{n+1} - \lambda Z_{n+1}\rangle +\gamma_n^2 \lVert Y_{n+1} - \lambda Z_{n+1}\rVert^2.
\end{align*}
Moreover, we have from Jensen's inequality and the fact that $\lambda$ belongs to $[0,1]$ that
\begin{equation}\label{sagag_th_convps_eq0_b}
    \begin{split}
      &\E[\lVert Y_{n+1} - \lambda Z_{n+1}\rVert^2| \F_n]\\
    &= \E[\lVert (Y_{n+1} - \nabla f_{U_{n+1}}(x^*))- \lambda (Z_{n+1}- \nabla f_{U_{n+1}}(x^*)) +(1-\lambda)\nabla f_{U_{n+1}}(x^*) \rVert^2| \F_n] \\
    &\leqslant 3\E[\lVert Y_{n+1} - \nabla f_{U_{n+1}}(x^*)\rVert^2| \F_n] +3\E[\lVert Z_{n+1}- \nabla f_{U_{n+1}}(x^*)\rVert^2| \F_n] \\
    &+3 \E[\lVert \nabla f_{U_{n+1}}(x^*)\rVert^2|\F_n].
    \end{split}
\end{equation}
First of all, we clearly have
\begin{equation}\label{sagag_th_convps_eq0_c}
    \E[\lVert \nabla f_{U_{n+1}}(x^*)\rVert^2|\F_n]=\dfrac{1}{N}\sum_{k=1}^N \lVert \nabla f_{k} (x^*)\rVert^2 \qquad a.s.
\end{equation}
In addition, denote
\[A_n=\dfrac{1}{N}\sum_{k=1}^N \lVert \nabla f_{k} (\phi_{n,k})-\nabla f_{k} (x^*) \rVert^2
\qquad \text{and} \qquad
\Sigma_n=\dfrac{1}{N} \sum_{k=1}^N \nabla f_{k} (\phi_{n,k}).
\]
Since $\nabla f(x^*)=0$, we obtain by expanding the norm that 
\begin{equation}\label{sagag_th_convps_eq0_d}
    \E[\lVert Z_{n+1}- \nabla f_{U_{n+1}}(x^*)\rVert^2| \F_n] 
 = A_n-\left\lVert \Sigma_n \right\rVert^2 \qquad a.s.
\end{equation}
Furthermore, define for all $x\in \R^d$,
\[\tau^2(x)=\dfrac{1}{N}\sum_{k=1}^N \lVert \nabla f_{k} (x)-\nabla f_{k} (x^*) \rVert^2.\]
One can observe that
\begin{equation}\label{sagag_th_convps_eq0_e}
    \E[\lVert Y_{n+1}- \nabla f_{U_{n+1}}(x^*)\rVert^2| \F_n] = \tau^2(X_n) \qquad a.s.
\end{equation}
Putting together the three contributions (\ref{sagag_th_convps_eq0_c}), (\ref{sagag_th_convps_eq0_d}) and (\ref{sagag_th_convps_eq0_e}), we deduce from (\ref{sagag_th_convps_eq0_b}) that 
\begin{equation}\label{sagag_th_convps_eq0_f}
     \E[\lVert Y_{n+1} -\lambda Z_{n+1}\rVert^2| \F_n] \leqslant 3(\tau^2(X_n) + A_n +\theta^*) \qquad a.s.
\end{equation}
where
\[\theta^*=\dfrac{1}{N}\sum_{k=1}^N \lVert \nabla f_{k} (x^*)\rVert^2.\]
Consequently, it follows from (\ref{sagag_th_convps_eq0_a2}) and (\ref{sagag_th_convps_eq0_f}) that for all $n\geq 1$,
\begin{equation}\label{sagag_th_convps_eq1}
    \E[V_{n+1}| \F_n]\leqslant V_n -2\gamma_n\langle  X_n-x^*, \nabla f(X_n)\rangle +3\gamma_n^2(\tau^2(X_n) + A_n +\theta^*) \quad a.s.
\end{equation}
Furthermore, let $(T_n)$ be the sequence of Lyapunov functions defined, for all $n\geqslant 2$, by
\begin{align}\label{sagag_lyap}
T_n&=V_n+3N\gamma_{n-1}^2  A_n.
\end{align}
It follows from the very definition of the sequence $(\phi_{n,k})$ associated with \eqref{def_gnk} that 
\begin{align}\label{sagag_th_convps_eq1b}
      \E[A_{n+1}| \F_n]&=\dfrac{1}{N}\sum_{k=1}^N \E[\lVert \nabla f_{k} (\phi_{n+1,k})-\nabla f_{k} (x^*) \rVert^2| \F_n],\nonumber\\
      &=\dfrac{1}{N}\sum_{k=1}^N \left(\dfrac{1}{N}\lVert \nabla f_{k} (X_n)-\nabla f_{k} (x^*) \rVert^2 + \left(1-\dfrac{1}{N}\right)\lVert \nabla f_{k} (\phi_{n,k})-\nabla f_{k} (x^*) \rVert^2 \right),\nonumber\\
      &= \dfrac{1}{N}\tau^2(X_n) +\left(1-\dfrac{1}{N}\right)A_n,
\end{align}
almost surely. Hence, we obtain from (\ref{sagag_th_convps_eq1}) and (\ref{sagag_th_convps_eq1b}) that
\begin{align}\label{sagag_th_convps_eq1c}
 \E[T_{n+1}| \F_n]
    &= \E[V_{n+1}| \F_n] +3N\gamma_{n}^2\E[A_{n+1}| \F_n], \nonumber\\
    &\leqslant V_n -2\gamma_n\langle  X_n-x^*, \nabla f(X_n)\rangle +3\gamma_n^2(\tau^2(X_n) +A_n+\theta^*) +3N\gamma_{n}^2\E[A_{n+1}| \F_n]\nonumber\\
    &=V_n+3N\gamma_n^2A_n-2\gamma_n\langle  X_n-x^*, \nabla f(X_n)\rangle+3\gamma_n^2(2 \tau^2(X_n) + \theta^*),\nonumber\\
    &\leqslant V_n+3N\gamma_{n-1}^2 A_n-2\gamma_n\langle  X_n-x^*, \nabla f(X_n)\rangle+3\gamma_n^2(2 \tau^2(X_n) + \theta^*), \nonumber\\
    &\leqslant T_n-2\gamma_n\langle  X_n-x^*, \nabla f(X_n)\rangle+3\gamma_n^2(2 \tau^2(X_n) + \theta^*). 
\end{align}
Additionally, we clearly have $V_n\leqslant T_n$ almost surely and it follows from Assumption \ref{saga2_cond3} that
\[\tau^2(X_n)\leqslant L V_n \leqslant L T_n.\]
Finally, we deduce from (\ref{sagag_th_convps_eq1c}) that
\begin{equation}\label{sagag_th_convps_eq2}
    \E[T_{n+1}| \F_n]\leqslant (1+6L\gamma_n^2)T_n-2\gamma_n\langle  X_n-x^*, \nabla f(X_n)\rangle + 3\gamma_n^2\theta^*  \qquad a.s.,
\end{equation}
which can be rewritten as 
\begin{equation*}\label{sagag_th_convps_eq2b}
    \E[T_{n+1}| \F_n]\leqslant (1+a_n)T_n+\mathcal{A}_n-\mathcal{B}_n \qquad a.s.
\end{equation*}
where $a_n=6L\gamma_n^2$, $\mathcal{A}_n=3\gamma_n^2\theta^*$ and $\mathcal{B}_n=2\gamma_n\langle  X_n-x^*, \nabla f(X_n)\rangle$.
The four sequences $(T_n)$, $(a_n)$, $(\mathcal{A}_n)$ and $(\mathcal{B}_n)$ are positive sequences of random variables adapted to $(\F_n)$. We clearly have from (\ref{gamma_cond1}) that
\[\sum_{n=1}^\infty a_n < +\infty \qquad \text{and} \qquad \sum_{n=1}^\infty \mathcal{A}_n < +\infty.\]
Then, it follows from the Robbins-Siegmund Theorem \citep{robbins1971convergence} given by Theorem \ref{thm_rs} that $(T_n)$ converges a.s. towards a finite random variable $T$ and the series
\begin{equation}\label{sagag_th_convps_eq2c}
    \sum_{n=1}^\infty \mathcal{B}_n <+\infty \qquad a.s.
\end{equation}
Consequently, $(V_n)$ also converges a.s. to a finite random variable $V$. It only remains to show that $V=0$ almost surely. Assume by contradiction that $V>0$. 
For some positive constants $a<b$, denote by $\Omega$ the annulus of $\R^d$, 
\[\Omega= \{x\in \mathbb{R}^d, \quad 0<a<\lVert x- x^* \rVert^2< b \}.\]
Let $F$ be the function defined, for all $x\in \R^d$, by
\[F(x)=\langle x - x^*,\nabla f(x)\rangle.\]
We have from Assumption \ref{saga2_cond1} that $F$ is a continuous function in $\Omega$ compact. It implies that there exists a positive constant $c$ such that $F(x)>c$ for all $x \in \Omega$. However, for $n$ large enough, $X_n\in \Omega$, which ensures that $\gamma_n\langle  X_n-x^*, \nabla f(X_n)\rangle>c\gamma_n$. Consequently, it follows from (\ref{sagag_th_convps_eq2c}) that
\[\sum_{n=1}^\infty \gamma_n <+\infty.\]
This is of course in contradiction with assumption (\ref{gamma_cond1}). Finally, we obtain that $V=0$ almost surely, leading to 
\[\lim_{n\to +\infty} X_n=x^* \qquad a.s.\]
By continuity of the function $f$, we also have (\ref{sagag_th_convps_res2}), which completes the proof of Theorem \ref{sagag_th_convps}.
\end{proof}
\subsection{Asymptotic normality}
We now focus our attention on the asymptotic normality of the \ref{sagag} algorithm with decreasing step. In this subsection, we assume that $f$ is twice differentiable and we denote by $H=\nabla^2 f(x^*)$ the Hessian matrix of $f$ at the point $x^*$. 

\begin{assumption} \label{saga_cond_eig}
Suppose that $f$ is twice differentiable with a unique equilibrium point 
$x^*$ in $\R^d$ such that $\nabla f(x^*) = 0$. Denote by $\rho =\lambda_{min}(H)$ the minimum eigenvalue of $H$. We assume that $\rho>1/2$.
\end{assumption}
\noindent The central limit theorem for the \ref{sagag} algorithm is as follows.
\begin{theorem}\label{sagag_th_tlc1}
    Consider a fixed $\lambda \in [0,1]$. Let $(X_n)$ be the sequence generated by the \ref{sagag} algorithm with decreasing step $\gamma_n=1/n$. Suppose that Assumption \ref{saga_cond_eig} is satisfied. Assume also that 
\begin{equation}
    \label{ascvgclt}
    \lim_{n \to +\infty} X_n=x^* \qquad a.s.
\end{equation}
Then, we have the asymptotic normality
    \begin{equation} 
    \label{cltsaga}
        \sqrt{n} (X_n -x^*)  \quad \overset{\mathcal{L}}{\underset{n\to +\infty}{\longrightarrow}} \quad \mathcal{N}_d(0,\Sigma)
    \end{equation}
    where the asymptotic covariance matrix is given by
    \[\Sigma=(1-\lambda)^2 \int_0^\infty (e^{-(H-\mathds{I}_d/2)u})^T \Gamma e^{-(H-\mathds{I}_d/2)u} du,\]
    with 
    \[\Gamma =\dfrac{1}{N}\sum_{k=1}^N \nabla f_{k} (x^*) \left(\nabla f_{k} (x^*)\right)^T.\]
\end{theorem}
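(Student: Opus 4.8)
The plan is to recognize the \ref{sagag} recursion as a standard Robbins-Monro scheme with step $\gamma_n = 1/n$ and to apply a classical central limit theorem for stochastic approximation (in the spirit of Fabian, Duflo, or Pelletier). Writing $X_{n+1} = X_n - \gamma_n(\nabla f(X_n) + \Delta M_{n+1})$ with $\Delta M_{n+1} = (Y_{n+1} - \lambda Z_{n+1}) - \nabla f(X_n)$ a martingale increment, I would linearize the drift around $x^*$: since $\nabla f(x^*)=0$ and $f$ is twice differentiable, $\nabla f(X_n) = H(X_n - x^*) + o(\|X_n-x^*\|)$, so the normalized error $\sqrt{n}(X_n-x^*)$ is governed by the linearized recursion with matrix $H - \mathds{I}_d/2$ (the $-\mathds{I}_d/2$ coming from the $\sqrt{n}$ normalization with $\gamma_n=1/n$), whose stability requires exactly $\rho = \lambda_{min}(H) > 1/2$, i.e.\ Assumption \ref{saga_cond_eig}.

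The key steps, in order, would be: (i) verify the a.s.\ convergence hypothesis \eqref{ascvgclt} is available (it is assumed, or follows from Theorem \ref{sagag_th_convps} under its assumptions); (ii) compute the limiting conditional covariance of the martingale increment, namely $\lim_n \E[\Delta M_{n+1}(\Delta M_{n+1})^T \mid \F_n]$ on the event $X_n \to x^*$. The crucial point here is that at $x^*$ we have $\phi_{n,k} \to x^*$ for every $k$ (since each $g_{n,k}$ eventually equals a gradient evaluated near $x^*$), hence $Z_{n+1} \to \nabla f_{U_{n+1}}(x^*) - \frac1N\sum_k \nabla f_k(x^*) = \nabla f_{U_{n+1}}(x^*)$ and $Y_{n+1} \to \nabla f_{U_{n+1}}(x^*)$, so $Y_{n+1} - \lambda Z_{n+1} \to (1-\lambda)\nabla f_{U_{n+1}}(x^*)$ while $\nabla f(X_n) \to 0$; therefore the limiting covariance is $(1-\lambda)^2 \Gamma$ with $\Gamma = \frac1N\sum_k \nabla f_k(x^*)(\nabla f_k(x^*))^T$; (iii) check a conditional Lindeberg condition, which follows from Assumption \ref{saga2_cond3} (giving an $\bL^2$ bound on the increments) combined with $X_n \to x^*$ and, if needed, a uniform integrability argument on $\{V_n\}$; (iv) invoke the stochastic-approximation CLT to conclude $\sqrt{n}(X_n-x^*) \Rightarrow \mathcal{N}_d(0,\Sigma)$ with $\Sigma = (1-\lambda)^2\int_0^\infty (e^{-(H - \mathds{I}_d/2)u})^T \Gamma\, e^{-(H - \mathds{I}_d/2)u}\, du$, the unique solution of the Lyapunov equation $(H - \mathds{I}_d/2)\Sigma + \Sigma(H-\mathds{I}_d/2)^T = (1-\lambda)^2\Gamma$.

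The main obstacle I anticipate is step (ii), specifically the careful handling of the covariate terms $g_{n,k} = \nabla f_k(\phi_{n,k})$: unlike in plain SGD, the noise increment depends on the whole history of which indices were sampled, so one must argue that on the a.s.\ convergence event all the "memory points" $\phi_{n,k}$ are eventually close to $x^*$ and that the resulting error in the conditional covariance is $o(1)$. This requires a quantitative control — essentially showing $\E[\|g_{n,k} - \nabla f_k(x^*)\|^2] \to 0$, which can be extracted from the recursion \eqref{sagag_th_convps_eq1b} for $A_n$ together with $\tau^2(X_n) \to 0$ — so that the variance-reduction structure contributes nothing to the asymptotic variance beyond the $(1-\lambda)^2$ factor. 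A secondary technical point is ensuring the remainder from linearizing $\nabla f$ is negligible after multiplication by $\sqrt{n}$, which is standard given $X_n \to x^*$ a.s.\ and the twice-differentiability of $f$, but still needs the rate $\|X_n - x^*\| = O(1/\sqrt{n})$ in probability to close the argument, typically obtained as a by-product of the $\bL^p$ bounds or a bootstrap on the Lyapunov estimate.
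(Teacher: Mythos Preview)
Your proposal follows essentially the same route as the paper: cast the recursion in Robbins--Monro form $X_{n+1}=X_n-\gamma_n(\nabla f(X_n)+\varepsilon_{n+1})$, identify the limiting conditional covariance of $\varepsilon_{n+1}$ as $(1-\lambda)^2\Gamma$, verify a Lindeberg condition, and invoke a stochastic-approximation CLT (the paper cites Theorem~2.3 of \cite{zhang2016central}). Two of your anticipated obstacles dissolve more easily than you expect. First, $\phi_{n,k}\to x^*$ a.s.\ requires no $\bL^2$ control via the $A_n$ recursion: since $\{U_n=k\}$ occurs infinitely often a.s.\ (second Borel--Cantelli), $(\phi_{n,k})_{n}$ is simply a subsequence of $(X_n)$ and inherits \eqref{ascvgclt}. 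Second, the linearization remainder needs no a priori $O(1/\sqrt{n})$ rate on $\|X_n-x^*\|$; the CLT in \cite{zhang2016central} only requires $X_n\to x^*$ a.s.\ together with differentiability of $\nabla f$ at $x^*$, and handles the $o(\|X_n-x^*\|)$ term internally. Finally, for Lindeberg note that Assumption~\ref{saga2_cond3} is \emph{not} a hypothesis of Theorem~\ref{sagag_th_tlc1}; the paper instead bounds $\E[\|\varepsilon_{n+1}\|^4\mid\F_n]$ directly by a constant times $(\tau^2(X_n)+\theta^*)^2+(A_n+\theta^*)^2$, which is a.s.\ bounded once $X_n\to x^*$ and $\phi_{n,k}\to x^*$.
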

\begin{proof}
    The proof of Theorem \ref{sagag_th_tlc1} can be found in Appendix \ref{app:sagag_th_tcl1}. 
\end{proof}

\setcounter{theorem}{0}
\begin{remark}\label{sagag_rmk_tlc1} It was proven in Theorem \ref{sagag_th_convps} that the almost sure convergence \eqref{ascvgclt} holds under Assumptions
\ref{saga2_cond1}, \ref{saga2_cond2}, \ref{saga2_cond3}. It is obvious to see that Assumption \ref{saga_cond_eig} implies Assumption \ref{saga2_cond1}.
Consequently, Theorem \ref{sagag_th_tlc1} is also true when replacing \eqref{ascvgclt} by Assumptions  \ref{saga2_cond2} and \ref{saga2_cond3}.
\end{remark}
Many conclusions can be drawn from Theorem \ref{sagag_th_tlc1}.
First of all, if we assume that $\Gamma$ is a positive definite matrix, we obtain that our \ref{sagag} algorithm with $0\leqslant \lambda <1$, converges towards a centered normal distribution with positive definite variance. However, as soon as $\lambda=1$,  the limit distribution becomes a centered normal with variance $\Sigma=0$, in other words a Dirac measure.
Thus, the asymptotic distribution of the \ref{saga} algorithm has zero variance and one can therefore try to understand it. In fact, the conditional variances of the two terms of the martingale difference ($\varepsilon_{n}$) extracted from this algorithm, converge almost surely to exactly the same matrix. Therefore, the conditional variance of $(\varepsilon_{n})$ vanishes which explains the final result for the SAGA algorithm.
Moreover, Theorem \ref{sagag_th_tlc1} clearly shows the asymptotic variance reduction effect. Indeed, when $\lambda$ grows to $1$, we observe that the variance $\Sigma$ decreases and converges towards 0. Hence, for statistical inference purposes such that hypothesis test and confidence interval, we can take $\lambda$ just a little smaller than 1 to reduce the variance with respect to \ref{sgd}, but without canceling it.

\subsection{Non-asymptotic convergence rates}
In the same vein as \cite{moulines2011non} for the Robbins-Monro algorithm, we shall now establish non-asymptotic $\bL^p$ convergence rates. Hence, our goal is to investigate, for all integer $p\geqslant 1$, the convergence rate of $\E[\lVert X_n-x^*\rVert^{2p}]$ for the \ref{sagag} algorithm where the decreasing step is defined, for all $n \geqslant1$ by,
\begin{equation}\label{gamma_cond2}
    \gamma_n=\dfrac{c}{n^\alpha},
\end{equation}
for some positive constant $c$ and $1/2<\alpha \leqslant 1$.
First of all, we focus our attention on the standard case $p=1$ by analyzing our algorithm with a little more stringent condition than Assumption \ref{saga2_cond2}.
\begin{assumption}\label{saga2_cond4}
    Assume there exists a positive constant $\mu$ such that for all $x\in \R^d $ with $x\neq x^*$,
    \begin{equation*}
        \langle x-x^*,\nabla f(x)\rangle \geqslant\mu\lVert x-x^* \rVert^2.
    \end{equation*}
\end{assumption}
Although this is a strengthened version of Assumption \ref{saga2_cond2}, it is still weaker than the usual $\mu-$strong convexity assumption on the function $f$. This condition is sometimes called in the literature the Restricted Secant Inequality. 

\setcounter{theorem}{2}
\begin{theorem}\label{sagag_th_mse1}
    Consider a fixed $\lambda \in [0,1]$. Let $(X_n)$ be the sequence generated by the \ref{sagag} algorithm with decreasing step sequence $(\gamma_n)$ defined by (\ref{gamma_cond2}). Suppose that Assumptions \ref{saga2_cond1}, \ref{saga2_cond3} and \ref{saga2_cond4} are satisfied with $2 c\mu\leqslant 2^\alpha$ and $2 c\mu>1$ if $\alpha=1$.
    Then, there exists a positive constant $K$ such that for all $n \geqslant 1$,
    \begin{equation}\label{sagag_th_mse1_res1}
        \E\big[\lVert X_n-x^*\rVert^2\big]\leqslant \dfrac{K}{n^\alpha}.
    \end{equation}
\end{theorem}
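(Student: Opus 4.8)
The plan is to turn the conditional estimate \eqref{sagag_th_convps_eq2} obtained in the proof of Theorem~\ref{sagag_th_convps} into a deterministic recursion for $\E[T_n]$, where $T_n=V_n+3N\gamma_{n-1}^2A_n$ is the Lyapunov function of \eqref{sagag_lyap}; since $V_n\le T_n$, any bound of the form $\E[T_n]\le K/n^\alpha$ immediately yields \eqref{sagag_th_mse1_res1}. A preliminary point is integrability: by induction on $n$, Assumption~\ref{saga2_cond3} bounds $\tfrac1N\sum_{k}\lVert\nabla f_k(x)\rVert^2$ by an affine function of $\lVert x-x^*\rVert^2$, and since each auxiliary point $\phi_{n,k}$ belongs to $\{X_0,\dots,X_{n-1}\}$, one gets $X_n\in\bL^2$ and hence $\E[T_n]<\infty$ for every $n$, which makes the manipulations below licit.

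I would first prove $\bL^2$-boundedness. The term $2\gamma_n\langle X_n-x^*,\nabla f(X_n)\rangle$ in \eqref{sagag_th_convps_eq2} is nonnegative by Assumption~\ref{saga2_cond4}; dropping it and taking expectations gives $\E[T_{n+1}]\le(1+6L\gamma_n^2)\E[T_n]+3\gamma_n^2\theta^*$, and since $\sum_n\gamma_n^2<\infty$ the infinite product $\prod_n(1+6L\gamma_n^2)$ converges, so $C:=\sup_n\E[T_n]<\infty$. In particular $\E[V_n]\le C$, and plugging this together with $\tau^2(X_n)\le LV_n$ into the expectation of \eqref{sagag_th_convps_eq1b} yields $\E[A_{n+1}]\le(1-\tfrac1N)\E[A_n]+\tfrac{LC}{N}$, hence $C_1:=\sup_n\E[A_n]<\infty$.

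With these bounds in hand I would exploit the contraction. Using Assumption~\ref{saga2_cond4} in \eqref{sagag_th_convps_eq2}, taking expectations, and writing $\E[V_n]=\E[T_n]-3N\gamma_{n-1}^2\E[A_n]\ge\E[T_n]-3N\gamma_{n-1}^2C_1$ leads to
\[\E[T_{n+1}]\le\bigl(1-2\mu\gamma_n+6L\gamma_n^2\bigr)\E[T_n]+6N\mu C_1\,\gamma_n\gamma_{n-1}^2+3\gamma_n^2\theta^*.\]
Because $\gamma_n=c/n^\alpha$ is decreasing with $\gamma_{n-1}\le 2^\alpha\gamma_n$ for $n\ge2$, both additive terms are $O(\gamma_n^2)$, so there is a constant $C_2$ with $\E[T_{n+1}]\le(1-2\mu\gamma_n+6L\gamma_n^2)\E[T_n]+C_2\gamma_n^2$ for $n\ge2$. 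It then suffices to invoke the classical deterministic lemma for recursions of the type $u_{n+1}\le(1-2\mu\gamma_n+O(\gamma_n^2))u_n+O(\gamma_n^2)$: one shows $\E[T_n]\le K/n^\alpha$ by induction, the inductive step reducing, after using $1/(n+1)^\alpha\ge n^{-\alpha}(1-\alpha/n)$, to an elementary inequality; when $1/2<\alpha<1$ the perturbation $O(n^{-2\alpha})$ dominates the discretisation term $O(n^{-1-\alpha})$ provided $K$ is large, while when $\alpha=1$ the hypothesis $2c\mu>1$ provides the required margin. The condition $2c\mu\le 2^\alpha$ ensures that $1-2\mu\gamma_n$ stays nonnegative for $n\ge2$, so the induction runs from the start, the finitely many first indices being absorbed into $K$ thanks to $\E[T_n]\le C$. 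Since $\E[\lVert X_n-x^*\rVert^2]=\E[V_n]\le\E[T_n]$, this establishes \eqref{sagag_th_mse1_res1}.

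\emph{The main obstacle} is the coupling between $V_n$ and the ``memory'' term $A_n$: one cannot argue on $\E[V_n]$ alone but must carry $A_n$ inside $T_n$, prove its $\bL^1$-boundedness, and verify that it feeds back only as a lower-order $O(\gamma_n^2)$ perturbation. The second delicate point is the bookkeeping of constants in the concluding recursion lemma, especially in the borderline regime $\alpha=1$, $\gamma_n\asymp1/n$, where the rate $n^{-1}$ genuinely hinges on $2c\mu>1$.
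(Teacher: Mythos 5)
Your proof is correct, and it reaches the same deterministic recursion-plus-contraction endgame as the paper, but by a noticeably different middle route, so a comparison is worth recording. The paper works with $\E[V_n]$ alone: it takes expectations in \eqref{sagag_th_convps_eq1}, uses Assumption \ref{saga2_cond4} to get the factor $(1-2\mu\gamma_n)$, and then needs the uniform bounds $\E[\tau^2(X_n)]\leqslant b_1$ and $\E[A_n]\leqslant b_2$, which it imports from Corollary \ref{sagag_cor_mse1a} --- itself a consequence of Theorem \ref{sagag_th_mse1a}, proved via a normalized Lyapunov function and the Robbins--Siegmund theorem. This puts the recursion exactly in the form $\E[V_{n+1}]\leqslant(1-a/(n+1)^\alpha)\E[V_n]+b/(n+1)^{2\alpha}$ covered by Lemma \ref{app_lemma2}, whose hypotheses ($a\leqslant 2^\alpha$, and $\beta=2\alpha<a+1$ when $\alpha=1$) are precisely the conditions $2c\mu\leqslant 2^\alpha$ and $2c\mu>1$ of the theorem. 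You instead carry the memory term inside the Lyapunov function $T_n=V_n+3N\gamma_{n-1}^2A_n$ throughout, obtain $\sup_n\E[T_n]<\infty$ and $\sup_n\E[A_n]<\infty$ by an elementary convergent-product argument (thereby bypassing the Robbins--Siegmund machinery of Appendix E for this particular theorem), and arrive at $\E[T_{n+1}]\leqslant(1-2\mu\gamma_n+6L\gamma_n^2)\E[T_n]+C_2\gamma_n^2$. The price of this choice is that Lemma \ref{app_lemma2} no longer applies verbatim because of the $+6L\gamma_n^2$ perturbation in the coefficient; your direct induction handles it correctly --- the perturbation contributes $O(n^{-3\alpha})\,\E[T_n]$, which is negligible against the gain $2\mu c K n^{-2\alpha}$, and in the borderline case $\alpha=1$ the margin $2c\mu>1$ is exactly what absorbs the discretisation loss $Kn^{-1}(n+1)^{-1}$ --- but you should either write that induction out or first absorb the perturbation into the coefficient $1-a'/(n+1)^\alpha$ for some $a'\in(1,2\mu c]$ valid for $n$ large and then quote Lemma \ref{app_lemma2} after an index shift. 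Your identification of the crossing term between $V_n$ and $A_n$ as the main obstacle matches the structural reason the paper isolates Theorem \ref{sagag_th_mse1a} in a separate appendix.
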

\begin{proof}
    The proof of Theorem \ref{sagag_th_mse1} can be found in Appendix \ref{app:sagag_th_mse1}. 
\end{proof}
Next, we carry out our analysis in the general case $p\geqslant1$. It requires a strengthened version of Assumption \ref{saga2_cond3} given as follows.
\begin{assumption}\label{sagag_cond1}
    Assume that for some integer $p\geqslant 1$, there exists a positive constant $L_p$ such that for all $x\in \R^d$,
\[\dfrac{1}{N}\sum_{k=1}^N \lVert \nabla f_{k} (x)-\nabla f_{k} (x^*) \rVert^{2p} \leqslant L_p \lVert x -x^*\rVert^{2p}.\]
\end{assumption}
\begin{theorem}\label{sagag_th_mse2}
    Consider a fixed $\lambda \in [0,1]$. Let $(X_n)$ be the sequence generated by the \ref{sagag} algorithm with decreasing step sequence $(\gamma_n)$ defined by (\ref{gamma_cond2}) and such that the initial state $X_1$ belongs to $\bL^{2p}$. Suppose that Assumptions \ref{saga2_cond1}, \ref{saga2_cond4} and \ref{sagag_cond1} are satisfied with $p c\mu\leqslant 2^\alpha$ and $c\mu>1$ if $\alpha=1$.
    Then, there exists a positive constant $K_p$ such that for all $n \geqslant 1$,
    \begin{equation}\label{sagag_th_mse2_res1}
        \E\big[\lVert X_n-x^*\rVert^{2p}\big]\leqslant \dfrac{K_p}{n^{p\alpha}}.
    \end{equation}
\end{theorem}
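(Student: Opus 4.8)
The plan is to carry the Lyapunov-function argument behind Theorem~\ref{sagag_th_mse1} up to the $2p$-th moment, combined with an induction on the exponent. Since Assumption~\ref{sagag_cond1} for $p$ implies, via Jensen's inequality, the same bound for every integer $q\leqslant p$ with constant $L_p^{q/p}$, it is enough to prove \eqref{sagag_th_mse2_res1} successively for $q=1,\dots,p$, the case $q=1$ being Theorem~\ref{sagag_th_mse1}; so I would assume $\E[\lVert X_n-x^*\rVert^{2q}]\leqslant K_q n^{-q\alpha}$ for all $q<p$ and prove it for $q=p$. Throughout I write the \ref{sagag} recursion as $X_{n+1}=X_n-\gamma_n R_{n+1}$ with $R_{n+1}=Y_{n+1}-\lambda Z_{n+1}$ as in \eqref{sagag_th_convps_eq0_a}, so that $\E[R_{n+1}|\F_n]=\nabla f(X_n)$ a.s., keep $V_n=\lVert X_n-x^*\rVert^2$, and introduce, for $1\leqslant q\leqslant p$,
\[A_n^{(q)}=\frac1N\sum_{k=1}^N\lVert\nabla f_k(\phi_{n,k})-\nabla f_k(x^*)\rVert^{2q},\]
which by \eqref{def_gnk} obey, exactly as in \eqref{sagag_th_convps_eq1b}, the recursion $\E[A_{n+1}^{(q)}|\F_n]=\frac1N\tau_q^2(X_n)+(1-1/N)A_n^{(q)}$ with $\tau_q^2(x)=\frac1N\sum_k\lVert\nabla f_k(x)-\nabla f_k(x^*)\rVert^{2q}\leqslant L_p^{q/p}\lVert x-x^*\rVert^{2q}$. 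The assumptions $X_1\in\bL^{2p}$ and $\sum_n\gamma_n^2<\infty$ yield the a priori bounds $\sup_n\E[V_n^q]<\infty$ and $\sup_n\E[A_n^{(q)}]<\infty$ for $q\leqslant p$.

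First I would record, for $1\leqslant q\leqslant p$, the conditional moment estimate $\E[\lVert R_{n+1}\rVert^{2q}|\F_n]\leqslant C_q(V_n^q+A_n^{(q)}+1)$ a.s. It follows by splitting $R_{n+1}$ into $Y_{n+1}-\nabla f_{U_{n+1}}(x^*)$, $-\lambda(Z_{n+1}-\nabla f_{U_{n+1}}(x^*))$ and $(1-\lambda)\nabla f_{U_{n+1}}(x^*)$, using the convexity bound $\lVert a+b+c\rVert^{2q}\leqslant3^{2q-1}(\lVert a\rVert^{2q}+\lVert b\rVert^{2q}+\lVert c\rVert^{2q})$, and reproducing the computations \eqref{sagag_th_convps_eq0_c}--\eqref{sagag_th_convps_eq0_f} with exponent $q$ in place of $1$: Assumption~\ref{sagag_cond1} controls $\E[\lVert Y_{n+1}-\nabla f_{U_{n+1}}(x^*)\rVert^{2q}|\F_n]=\tau_q^2(X_n)$ by $L_p^{q/p}V_n^q$, while Jensen's inequality bounds $\E[\lVert Z_{n+1}-\nabla f_{U_{n+1}}(x^*)\rVert^{2q}|\F_n]$ by a multiple of $A_n^{(q)}$.

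Next, expanding $V_{n+1}=V_n-2\gamma_n\langle X_n-x^*,R_{n+1}\rangle+\gamma_n^2\lVert R_{n+1}\rVert^2$, raising it to the power $p$ via the binomial theorem and conditioning on $\F_n$, I would isolate the leading drift $-2p\gamma_n V_n^{p-1}\langle X_n-x^*,\nabla f(X_n)\rangle$, which Assumption~\ref{saga2_cond4} bounds above by $-2p\mu\gamma_n V_n^p$. Every remaining term carries a factor $\gamma_n^r$ with $r\geqslant2$; estimating it through $|\langle X_n-x^*,R_{n+1}\rangle|\leqslant\sqrt{V_n}\,\lVert R_{n+1}\rVert$ and the previous moment bounds turns it, after conditioning, into a nonnegative combination of $V_n^p$ (with coefficient $O(\gamma_n^2)$), of strictly lower powers of $V_n$, and of cross terms $V_n^{p-j}A_n^{(q)}$. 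To keep the cross terms from feeding back into $\E[V_n^p]$, I would absorb them into a Lyapunov sequence $T_n$ obtained from $V_n^p$ by adding a triangular family of corrections of the type $\gamma_{n-1}^{2j}V_n^{p-j}A_n^{(q)}$ with suitably chosen constant weights: by the recursion for $A_{n+1}^{(q)}$ above and the monotonicity of $(\gamma_n)$, these corrections telescope just as $3N\gamma_{n-1}^2A_n$ does in \eqref{sagag_lyap}--\eqref{sagag_th_convps_eq2}. After also absorbing the $O(\gamma_n^2)$ multiple of $V_n^p$ into the drift coefficient, one is left with
\[\E[T_{n+1}|\F_n]\leqslant(1-2p\mu\gamma_n+K\gamma_n^2)T_n+\mathcal{E}_n\qquad a.s.,\]
where $\mathcal{E}_n\geqslant0$ is a combination of terms $\gamma_n^rV_n^{p-j}$ with $r\geqslant2$ and $1\leqslant j\leqslant p-1$, plus a deterministic remainder of order $\gamma_n^{2p}$.

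Taking full expectations and invoking the induction hypothesis $\E[V_n^q]\leqslant K_q n^{-q\alpha}$ for $q<p$, one gets $\E[\mathcal{E}_n]=O(\gamma_n^2 n^{-(p-1)\alpha})=O(n^{-(p+1)\alpha})$, so $u_n:=\E[T_n]$ satisfies the scalar recursion $u_{n+1}\leqslant(1-2p\mu\gamma_n+K\gamma_n^2)u_n+K'\gamma_n^{p+1}$. The conditions $pc\mu\leqslant2^\alpha$ and $c\mu>1$ when $\alpha=1$ guarantee that $1-2p\mu\gamma_n+K\gamma_n^2$ stays in $(0,1)$ for every $n\geqslant1$ and that the drift rate $2p\mu\gamma_n$ beats the perturbation $\gamma_n^{p+1}$ at the right order; a deterministic recursion lemma in the spirit of \citep{moulines2011non} then yields $u_n\leqslant K_p\gamma_n^p=K_pc^p n^{-p\alpha}$, and \eqref{sagag_th_mse2_res1} follows from $V_n^p\leqslant T_n$. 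The main obstacle is the third step: designing the corrections $\gamma_{n-1}^{2j}V_n^{p-j}A_n^{(q)}$ so that every cross term telescopes or gets absorbed rather than reappearing on the right-hand side, and organizing the induction so that the joint control of $\E[V_n^q]$ and $\E[A_n^{(q)}]$ for $q\leqslant p$ closes with explicit constants and an explicit starting index.
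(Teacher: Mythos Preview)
Your overall scheme—induction on the exponent, a conditional recursion for $V_n^{p}$ driven by the drift $-2p\mu\gamma_n$, and a deterministic lemma of Bach--Moulines type—is the paper's. The gap is the absorption step. The corrections $\gamma_{n-1}^{2j}V_n^{p-j}A_n^{(q)}$ do \emph{not} telescope the way $3N\gamma_{n-1}^{2}A_n$ does in \eqref{sagag_lyap}--\eqref{sagag_th_convps_eq2}: there the added term is a standalone $A_n$, and the simple recursion \eqref{sagag_th_convps_eq1b} is used in isolation. Your corrections are \emph{products}, and since $V_{n+1}$ and $A_{n+1}^{(q)}$ both depend on the same $U_{n+1}$, the conditional expectation $\E[V_{n+1}^{p-j}A_{n+1}^{(q)}\mid\F_n]$ does not factor; expanding it regenerates, already at order $\gamma_n$, exactly the kind of cross terms you are trying to absorb, so the triangular system does not close. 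Your one-line a~priori claim $\sup_n\E[A_n^{(q)}]<\infty$ is also not a consequence of $X_1\in\bL^{2p}$ and $\sum\gamma_n^2<\infty$ alone—it needs its own Lyapunov argument—and even granting it, a mere uniform bound on $\E[A_{p,n}]$ would leave a remainder of size $\gamma_n^{2}$ rather than $\gamma_n^{p+1}$.

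The paper avoids products in the Lyapunov function entirely. First, instead of the full binomial expansion it uses the inequality (Lemma~\ref{app_lemma1})
\[
\|a+b\|^{2p}\leqslant\|a\|^{2p}+2p\langle a,b\rangle\|a\|^{2p-2}+C_p\|a\|^{2p-2}\|b\|^{2}+C_p\|b\|^{2p},
\]
so that after conditioning only two error terms survive and the sole cross term is $\gamma_n^{2}V_n^{p-1}(A_n+\theta^*)$. Second, rather than feeding this product into a Lyapunov function, the paper takes expectation and \emph{separates} it by H\"older,
\[
\E\!\big[A_nV_n^{p-1}\big]\leqslant\big(\E[A_n^{p}]\big)^{1/p}\big(\E[V_n^{p}]\big)^{(p-1)/p}\leqslant d_p^{1/p}\big(\E[V_n^{p}]\big)^{(p-1)/p},
\]
using $A_n^{p}\leqslant A_{p,n}$ and the uniform bound $\E[A_{p,n}]\leqslant d_p$ supplied by Corollary~\ref{sagag_cor_mse2a}. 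That auxiliary result is itself proved with a Lyapunov function that is again a \emph{sum}, $V_n^{p}+Ne_p\gamma_{n-1}^{2}A_{p,n}$, after splitting the one cross term by Young, $V_n^{p-1}A_n\leqslant\frac{p-1}{p}V_n^{p}+\frac{1}{p}A_{p,n}$. In short: decouple the product (H\"older or Young), control $\E[A_{p,n}]$ separately, and only then feed the scalar recursion for $\E[V_n^{p}]$ into Lemma~\ref{app_lemma2}; no product ever enters the Lyapunov function.
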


\begin{proof}
The proof of Theorem \ref{sagag_th_mse2} can be found in Appendix \ref{app:sagag_th_mse2}. 
\end{proof}
\begin{remark}\label{sagag_rmk_mse2}
 It is easy to see that Assumption \ref{sagag_cond1} also implies that for all $x\in \R^d$,
 \begin{equation}\label{sagag_rmk_mse2_res1}
     (f(x) -f(x^*))^p \leqslant \dfrac{\sqrt{L_p}}{2^p}\lVert x-x^*\rVert^{2p}.
 \end{equation}
 Then, it follows from Theorem \ref{sagag_th_mse2} together with inequality (\ref{sagag_rmk_mse2_res1}) that there exists a positive constant $M_p=2^{-p} K_p \sqrt{L_p}$ such that for all $n \geqslant 1$,
 \begin{equation}\label{sagag_rmk_mse2_res2}
     \E\big[(f(X_n) -f(x^*))^p\big] \leqslant \dfrac{M_p}{n^{p\alpha}}.
 \end{equation}
\end{remark}
\section{Numerical experiments}\label{sec:experiments}
Consider the logistic regression model
\citep{bach2014, bercu2020} associated with the classical minimization problem (\ref{problem1}) of the convex function $f$ given, for all $x\in \R^d$, by

\begin{equation*}\label{num_eq1}
    f(x)=\dfrac{1}{N}\sum_{k=1}^N f_k(x)=\dfrac{1}{N}\sum_{k=1}^N \left(
    \log (1+\exp (\langle x, w_k\rangle))-y_k\langle x, w_k\rangle\right),
\end{equation*}
where $x\!\in\!\R^d$ is a vector of unknown parameters, $w_k\!\in\! \R^d$ is a vector of features and the binary output $y_k\in\{0,1\}$. As stated, this problem is equivalent to the log-likelihood maximization problem, where the aim is to find the parameter $x^*$ that maximizes the probability of a given sample $((w_1,y_1), \ldots, (w_N,y_N))$, which follows a model depending only on the unknown parameter $x$. To be more precise, our model has a Bernoulli probability with parameter $p_k(x)$ following a logistic function for each $k$, 

\begin{equation*}
    p_k(x) = \frac{\exp(\langle x,w_k\rangle)}{1+\exp(\langle x,w_k\rangle)}.
\end{equation*}
It is easy to see that $f$ is twice differentiable and its Hessian matrix
is given by
\vspace{-1ex}
\begin{equation*}
\vspace{-1ex}
\nabla^2f(x)=\dfrac{1}{N}\sum_{k=1}^N p_k(x)(1-p_k(x)) w_k w_k^T.
\end{equation*}
Consequently, $f$ has an unique equilibrium point $x^*$ and if we assume that the minimum eigenvalue of $H=\nabla^2f(x^*)$ is greater than $1/2$, Assumption \ref{saga_cond_eig} will be automatically satisfied, and therefore Assumption \ref{saga2_cond1} too. Moreover, one can observe that Assumptions \ref{saga2_cond3} and \ref{sagag_cond1} hold with
\vspace{-1ex}
\begin{equation*}
\vspace{-1ex}
L_p=\dfrac{1}{4^p N} \sum_{k=1}^N ||w_k ||^{4p}.
\end{equation*}
We conducted experiments on the MNIST dataset in order to present a visualisation of the almost sure convergence in Theorem \ref{sagag_th_convps}, the asymptotic normality in Theorem \ref{sagag_th_tlc1} and the $\bL^2$ bound in Theorem \ref{sagag_th_mse1}. 
For the almost sure convergence, the training database considered here includes $N=60000$ images in gray-scale format and size $28\times28$. Each image $w_k$ is therefore a vector of dimension $d=28\times28=784$. Each of these images is identified with a number from 0 to 9 and we divide it into a binary classification so that $y_k=0$ if $\{0,1,2,3,4\}$ and $y_k=1$ if $\{5,6,7,8,9\}$.
The results concerning the convergence of the estimator $X_n$ of $x^*$ are illustrated in Figure \ref{fig:grad_cps250}. The convergence is ordered from slowest to fastest in an increasing order with respect to $\lambda \in \{0,0.5,0.9,1\}$.
\begin{figure}[H]
\centering
\includegraphics[width= 0.7\textwidth]{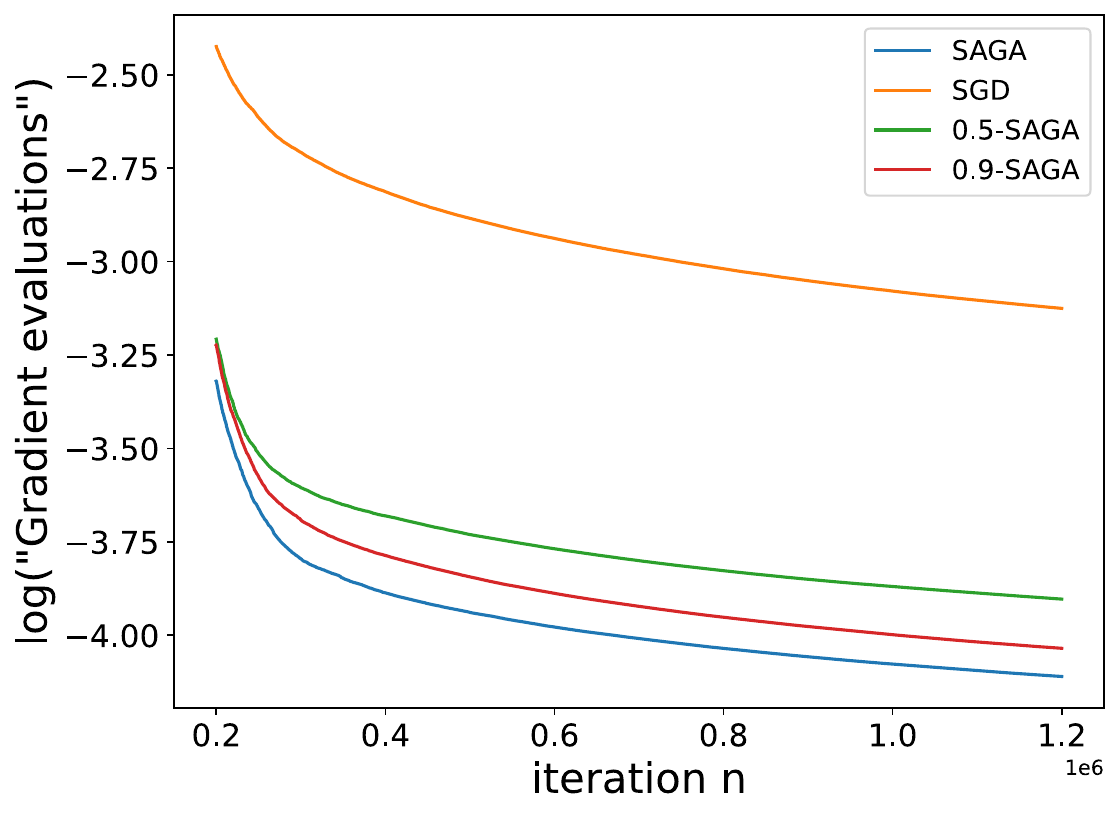}
\caption{Convergence with $\gamma_n=1/n$ for $1.2$M of iterations. Here we put ``Gradient evaluations'' since instead of using $\|\nabla f(X_n)\|$, we use the norm of the mean associated to the lines in the matrix $g_n$,  $\|\sum_{k=1}^N g_{n,k}\|/N$. This quantity keeps track of the convergence since it also converges to 0 and its lines converge to the gradients of the functions $f_k$, that is for each $1 \leqslant k \leqslant N$, $\lim g_{n,k}=\nabla f_k(x^*)$ as $n$ goes to infinity.}
\label{fig:grad_cps250}
\end{figure}
Moreover, to illustrate the asymptotic normality result, we use $N=100$, the first $100$ images in the MNIST dataset, and the distributional convergence 
\begin{equation*}
\lim_{n \rightarrow \infty} \E(h(\sqrt{n}(X_n-x^*))))= \E(h(\mathcal{N}_d(0,\Sigma))),
\end{equation*} 
where $h$ is defined, for all $x\in \R^d$, by
$h(x) = {\displaystyle \sum_{i=1}^d{x_i}}$.
It follows from Theorem \ref{sagag_th_tlc1} that
\begin{equation*}
h(\sqrt{n}(X_n-x^*))) \quad \overset{\mathcal{L}}{\underset{n\to +\infty}{\longrightarrow}} \quad \mathcal{N}(0,\sigma^2(\lambda)).
\end{equation*}
As the equilibrium point $x^*$ and the asymptotic variance $\sigma^2(\lambda)$ are unknown, we use estimators from the standard Monte Carlo procedure.
We denote for each fixed lambda $\widehat{\sigma}^{\,2}_n(\lambda)$ the estimator of $\sigma^2(\lambda)$. Given the form of our function $h$, we deduce that the limiting variances should be related as $(1-\lambda)^2\sigma^2(0) = \sigma^2(\lambda)$. The results are shown in Figure \ref{fig:all_graph_tlc}. 

\begin{figure}[H]
    \centering
    \begin{subfigure}[b]{0.49\textwidth}
        \includegraphics[width=\textwidth]{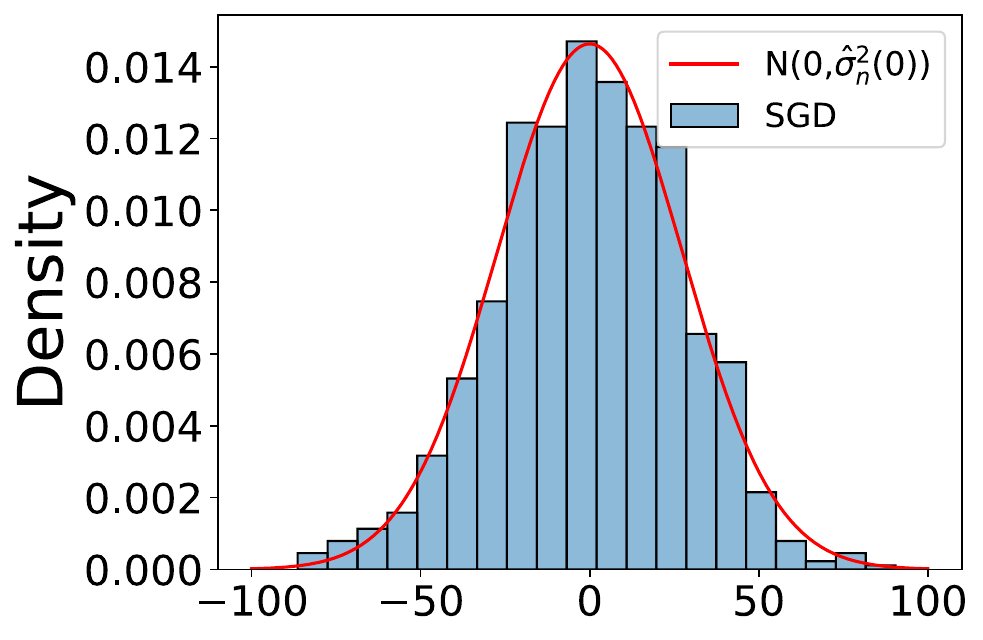}
        \caption{SGD $(\lambda = 0)$, where $\sigma_n^2(0)\approx 743.11$}
    \end{subfigure}
    \hfill
    \begin{subfigure}[b]{0.49\textwidth}
        \includegraphics[width=\textwidth]{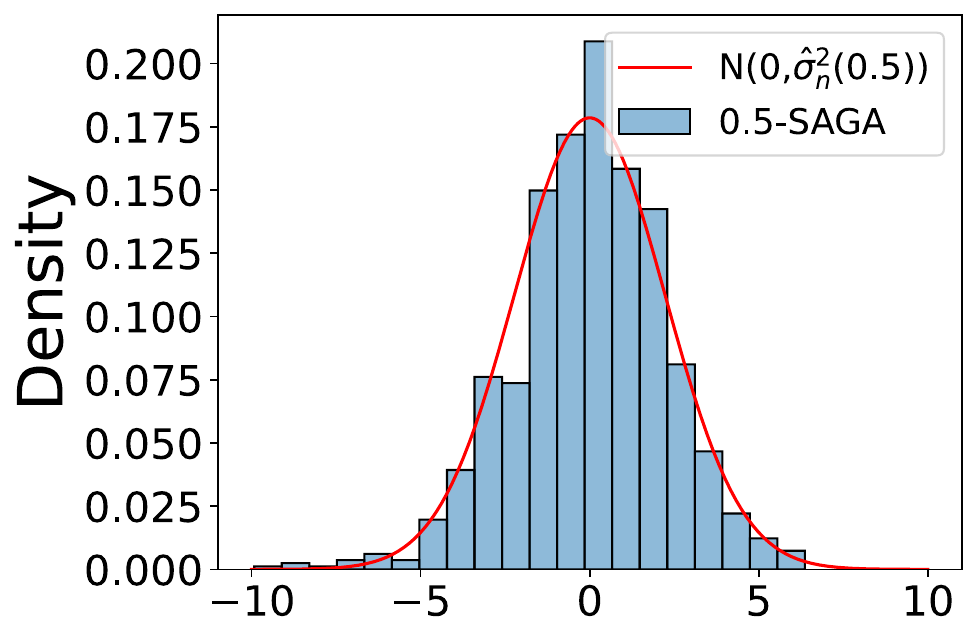}
        \caption{$0.5$-SAGA, where
        $\sigma_n^2(0.5)\approx 5$}
    \end{subfigure}
    \hfill
    \\
    \centering
    \begin{subfigure}[b]{0.49\textwidth}
        \includegraphics[width=\textwidth]{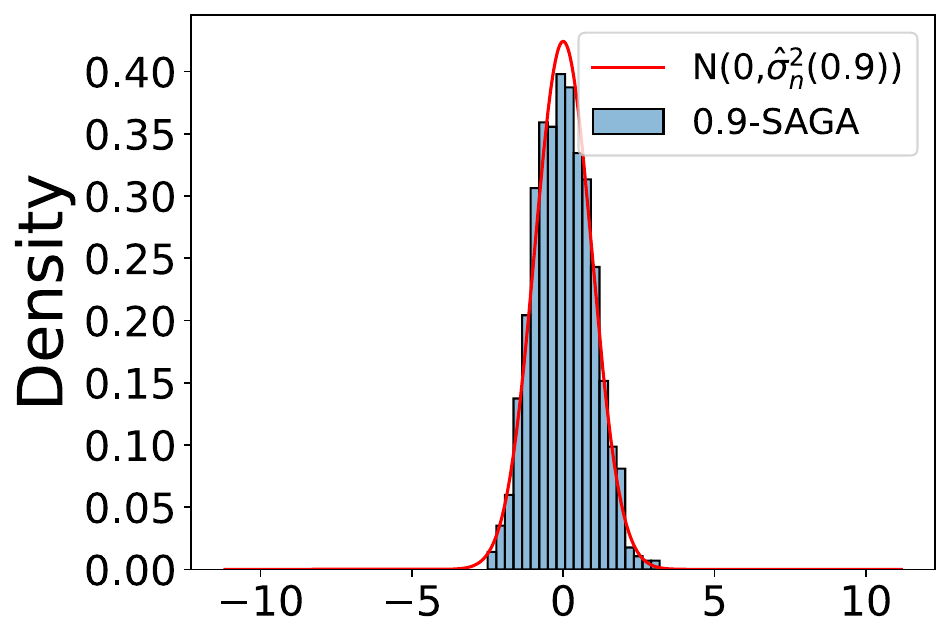}
        \caption{$0.9$-SAGA, where $\sigma_n^2(0.9)\approx 0.884$}
    \end{subfigure}
    \hfill
    \begin{subfigure}[b]{0.49\textwidth}
        \includegraphics[width=\textwidth]{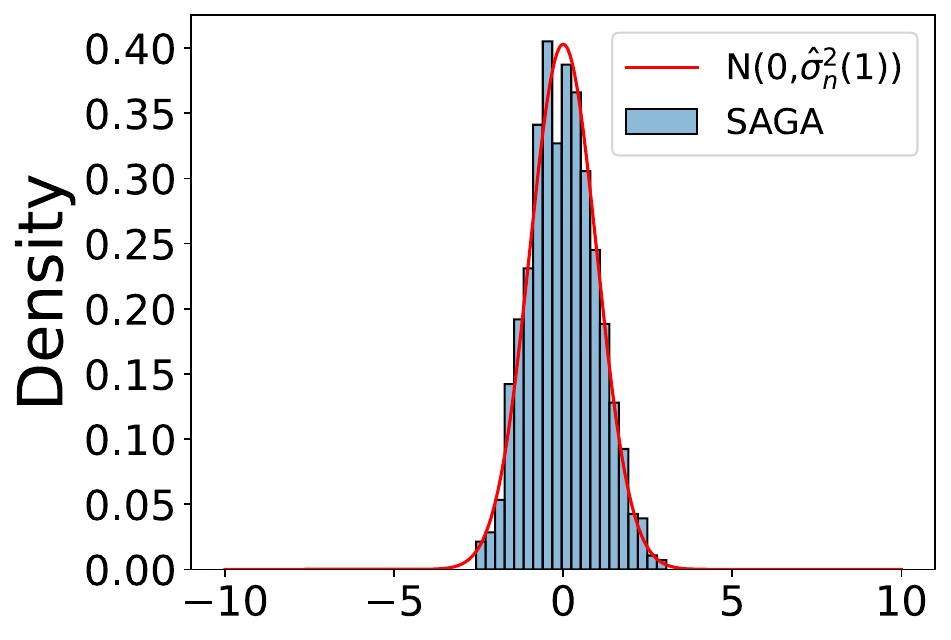}
        \caption{SAGA ($\lambda=1$), where $\sigma_n^2(1)\approx 0.98$}
    \end{subfigure}
    \hfill
    \caption{We used 1000 samples, where each one was obtained by running the associated algorithm for $n=500000$ iterations.}
    \label{fig:all_graph_tlc}
\end{figure}
\vspace{-3cm}
The main purpose of this plot is to represent the decreasing behavior of the variance with respect to the parameter $\lambda$. Even though for the SAGA ($\lambda=1$) we know that its variance converges to zero, for finite $n$ we can only see that it is shrinking with respect to $n$ to obtain at the limit a Dirac mass at 0. Here, the sample variances satisfy $\hat{\sigma}_n^2(0.9)<\hat{\sigma}_n^2(1)$. Nevertheless, they are still very close in the scale of the sample variance $\hat{\sigma}_n^2(0)$ of the SGD. We explain this as a consequence of the approximations and the fact that the models $\lambda=0.9$ and $\lambda=1$ are intimately related.

Finally, we present approximate results of the mean squared error $\E[\|X_n-x^*\|^2]$. For that purpose, we suppose that Assumption \ref{saga2_cond4} is satisfied so that Theorem \ref{sagag_th_mse1} holds. We run each algorithm for $100$ epochs, where each epoch consists of $1000$ iterations. In order to approximate the expectation, we apply the standard Monte Carlo procedure with $1000$ samples. Here, the approximation of $x^*$ is the result of running the SAGA algorithm for $40M$ iterations. The results are illustrated in Figure \ref{fig:L_p_norm_evolution}. The plot just give an intuition on the behavior of the mean squared error, since the constant $K$ in Theorem \ref{sagag_th_mse1} is unknown.
\begin{figure}[H]
\centering
\includegraphics[width= 0.7\textwidth]{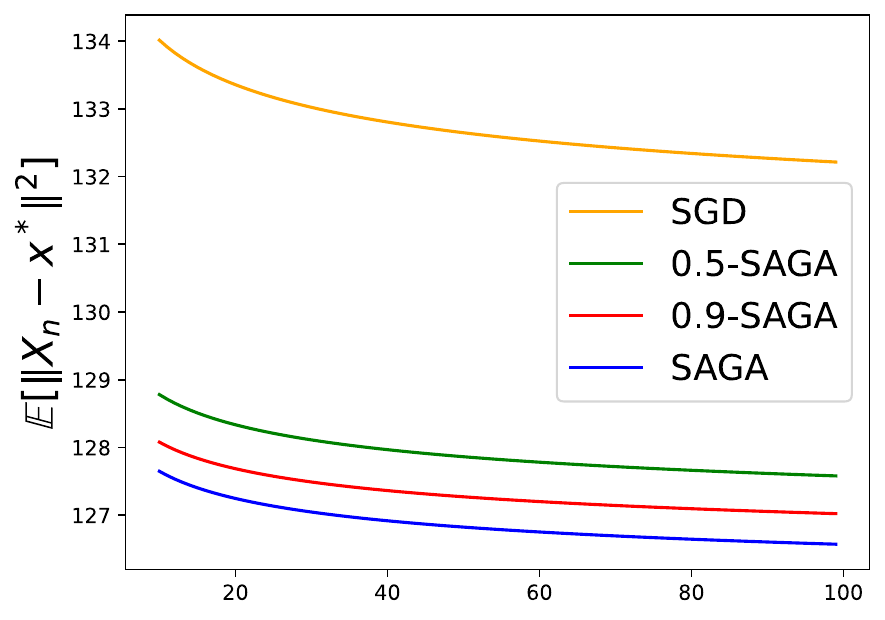}
\caption{Mean squared error with respect to epochs. We confirm the decreasing order of the mean squared error of $X_n-x^*$ with respect to $\lambda$ and $n$. }
\label{fig:L_p_norm_evolution}
\end{figure}

\section{Conclusion}
Stochastic optimization is one of the main challenges of machine learning touching almost every aspect of the discipline. Thus, in order to meet expectations, the SGD algorithm has been studied at length. However, the advent of Big Data for model learning led to the development of more sophisticated stochastic methods. In our study, we therefore highlight the properties of the new \ref{sagag} algorithm which is a generalization of the SAGA algorithm. We were able to establish the almost sure convergence and the asymptotic normality of this novel algorithm by using a decreasing step and without the strong convexity and Lipschitz gradient assumptions. The other major contribution of our paper concerns the convergence rates in $\mathbf{L}^p$ of the \ref{sagag} algorithm. Finally, stochastic algorithms offer multiple guarantees in terms of convergence and certainly promise to continue to have profound impacts on the fast development of the machine learning field.


\acks{This project has benefited from state support managed by the Agence Nationale de la Recherche (French National Research Agency) under the reference ANR-20-SFRI-0001.}

\vskip 0.2in

\newpage

\appendix

\section{Some useful existing results}
We first recall the well-known Robbins-Siegmund Theorem \citep{robbins1971convergence}.
\begin{apptheorem}[Robbins-Siegmund theorem]\label{thm_rs}
    Let $(V_n ) , (a_n ) , (\mathcal{A}_n ) , (\mathcal{B}_n)$ be four positive sequences of random variables adapted to a filtration $(\F_n )$ such that
    \[\E[V_{n+1}|\F_n]\leqslant (1+a_n) V_n +\mathcal{A}_n-\mathcal{B}_n,\]
    where
    \[\sum_{n=1}^{\infty} a_n < +\infty \qquad \text{and} \qquad \sum_{n=1}^{\infty} \mathcal{A}_n < +\infty \qquad a.s.
    \]
    Then, $(V_n)$ converges almost surely towards a finite random variable $V$ and 
    \[\sum_{n=1}^{\infty} \mathcal{B}_n < +\infty \qquad a.s.\]
\end{apptheorem}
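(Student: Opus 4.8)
The plan is to reduce the statement to the classical Doob convergence theorem for nonnegative supermartingales, by first removing the multiplicative factor $(1+a_n)$ and then compensating the additive terms. Since $a_n \geqslant 0$ and $\sum_n a_n < +\infty$ a.s., the products $b_n = \prod_{k=1}^{n-1}(1+a_k)$ increase to a finite limit $b_\infty$ a.s., so the weights $\beta_n = b_n^{-1}$ form a nonincreasing, $\F_{n-1}$-measurable sequence with $\beta_n \in [\beta_\infty,1]$ and $\beta_\infty > 0$ a.s. First I would set $V_n' = \beta_n V_n$ and use the identity $\beta_{n+1}(1+a_n) = \beta_n$ to rewrite the hypothesis as $\E[V_{n+1}'|\F_n] \leqslant V_n' + \beta_{n+1}\mathcal{A}_n - \beta_{n+1}\mathcal{B}_n$ a.s., which has eliminated the multiplicative perturbation.

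Next I would introduce the compensated process
\[
W_n = V_n' + \sum_{k=1}^{n-1}\beta_{k+1}\mathcal{B}_k - \sum_{k=1}^{n-1}\beta_{k+1}\mathcal{A}_k,
\]
which is adapted to $(\F_n)$. A direct computation using the rewritten inequality shows that the increments $\beta_{n+1}\mathcal{A}_n$ and $\beta_{n+1}\mathcal{B}_n$ cancel exactly against the new terms of the two sums, yielding $\E[W_{n+1}|\F_n] \leqslant W_n$ a.s.; thus $(W_n)$ is a genuine supermartingale.

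The main obstacle is that $(W_n)$ is not bounded below by an \emph{integrable} random variable: one only has $W_n \geqslant -\sum_{k=1}^{n-1}\beta_{k+1}\mathcal{A}_k \geqslant -\sum_{k=1}^{\infty}\mathcal{A}_k$, and the hypothesis $\sum_n \mathcal{A}_n < +\infty$ holds only almost surely, not in expectation, so Doob's theorem cannot be applied directly. I would resolve this by localization. For each $m \in \N$, define the stopping time $\tau_m = \inf\{n \geqslant 1 : \sum_{k=1}^{n}\mathcal{A}_k \geqslant m\}$, which is a genuine $(\F_n)$-stopping time because each partial sum is $\F_n$-measurable. Along the stopped trajectory the partial sums of $\mathcal{A}$ stay strictly below $m$, so $W_{n\wedge\tau_m} \geqslant -m$ uniformly in $n$; Doob's convergence theorem then gives a.s. convergence of $W_{n\wedge\tau_m}$ for each fixed $m$. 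Since $\sum_k \mathcal{A}_k < +\infty$ a.s., for almost every $\omega$ there is an $m$ with $\tau_m(\omega) = +\infty$, so $W_{n\wedge\tau_m}$ agrees with $W_n$ on a family of events exhausting the sample space; letting $m \to \infty$ shows that $(W_n)$ itself converges a.s. to a finite limit.

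Finally I would unwind the construction. Because $\beta_{k+1} \leqslant 1$ and $\sum_k \mathcal{A}_k < +\infty$ a.s., the subtracted sum $\sum_k \beta_{k+1}\mathcal{A}_k$ converges a.s., so $V_n' + \sum_{k=1}^{n-1}\beta_{k+1}\mathcal{B}_k$ converges to a finite limit. The second term is nondecreasing in $n$ while $V_n' \geqslant 0$, so these nondecreasing partial sums are bounded; consequently $\sum_k \beta_{k+1}\mathcal{B}_k < +\infty$ a.s. and $V_n'$ itself converges a.s. to a finite limit. Dividing by $\beta_n$ and using $\beta_n \to \beta_\infty \in (0,1]$ a.s. transfers this to the convergence of $V_n = V_n'/\beta_n$ to a finite random variable $V$, while the lower bound $\beta_{k+1} \geqslant \beta_\infty > 0$ a.s. gives $\sum_k \mathcal{B}_k \leqslant \beta_\infty^{-1}\sum_k \beta_{k+1}\mathcal{B}_k < +\infty$ a.s. This establishes both conclusions of the theorem.
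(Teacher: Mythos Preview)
The paper does not prove this statement; it is merely recalled from \citep{robbins1971convergence} as a known tool, so there is no in-paper argument to compare against. Your approach---rescale by $\beta_n=\prod_{k<n}(1+a_k)^{-1}$, compensate by the partial sums of $\beta_{k+1}\mathcal B_k$ and $\beta_{k+1}\mathcal A_k$ to build a supermartingale $(W_n)$, localize with $\tau_m$ to handle the merely almost-sure finiteness of $\sum\mathcal A_n$, apply Doob, then unwind using $\beta_n\to\beta_\infty>0$---is the classical route and is essentially how the result is established in the original reference. The unwinding step, in particular the passage from convergence of $W_n$ to separate convergence of $V_n'$ and of $\sum_k\beta_{k+1}\mathcal B_k$, is handled cleanly.

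One technicality is worth tightening: invoking optional stopping and Doob's theorem on $(W_{n\wedge\tau_m})$ presupposes that each $W_{n\wedge\tau_m}$ lies in $L^1$, which the hypotheses do not guarantee since none of $V_n,\mathcal A_n,\mathcal B_n$ is assumed integrable. Your localization bounds $W_{n\wedge\tau_m}$ from below by $-m$ but does not by itself force integrability of the positive part. The standard remedy is a further restriction to the $\F_1$-event $\{V_1\leqslant m\}$: then $\E\big[V_1'\,\mathds 1_{\{V_1\leqslant m\}}\big]\leqslant m$, integrability of the restricted stopped process propagates inductively from the recursion together with $\sum_{k<\tau_m}\mathcal A_k<m$, and since $\bigcup_m\{V_1\leqslant m,\ \tau_m=+\infty\}$ has full probability the remainder of your argument goes through unchanged.
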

\noindent The next two lemma provide very useful inequality for non-asymptotic convergence rates. The first lemma is given by Lemma A.3 in
supplementary material of \cite{bercu2021asymptotic}, see also Theorem 1 in \cite{moulines2011non}.

\begin{applemma}\label{app_lemma2} \citep{bercu2021asymptotic}. Let $(Z_n)$ be a sequence of positive real numbers satisfying, for all $n\geqslant 1$, the recursive inequality
\begin{equation}\label{app_lemma2_res1}
    Z_{n+1}\leqslant \left( 1- \dfrac{a}{(n+1)^\alpha}\right)Z_n +\dfrac{b}{(n+1)^\beta},
\end{equation}
where $a,b,\alpha$ and $\beta$ are positive constants satisfying $a\leqslant 2^\alpha$, $\alpha\leqslant 1$, $1< \beta < 2$ and $\beta\leqslant 2\alpha$ with $\beta< a+1$ in the special case where $\alpha=1$. Then, there exists a positive constant $C$ such that, for any $n\geqslant 1$,
\begin{equation}\label{app_lemma2_res2}
    Z_n \leqslant \dfrac{C}{n^{\beta-\alpha}}.
\end{equation}
\end{applemma}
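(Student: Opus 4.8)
The plan is to prove the bound (\ref{app_lemma2_res2}) by induction on $n$, with target exponent $\gamma := \beta - \alpha$. Before the induction I would record three elementary facts that the hypotheses are precisely tailored to provide. First, since $1 < \beta \leqslant 2\alpha$ and $\alpha \leqslant 1$, one has $0 < \gamma \leqslant \alpha \leqslant 1$, and moreover $\beta - \gamma = \alpha$, so the forcing term $b/(n+1)^\beta$ in (\ref{app_lemma2_res1}) carries exactly the factor $(n+1)^{-\alpha}$ relative to the target $(n+1)^{-\gamma}$. Second, the condition $a \leqslant 2^\alpha$ guarantees $a/(n+1)^\alpha \leqslant a/2^\alpha \leqslant 1$ for every $n \geqslant 1$, hence the contraction factor $1 - a/(n+1)^\alpha$ is nonnegative; this is what lets me multiply an induction hypothesis $Z_n \leqslant C/n^\gamma$ by this factor without reversing the inequality. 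Third, by concavity $(1 + 1/n)^\gamma \leqslant 1 + \gamma/n$ for $0 < \gamma \leqslant 1$, which yields the key comparison $n^{-\gamma} \leqslant (1 + \gamma/n)(n+1)^{-\gamma}$.

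Assuming $Z_n \leqslant C/n^\gamma$, I would substitute into (\ref{app_lemma2_res1}) and apply the comparison above together with $\beta - \gamma = \alpha$ to obtain
\[
Z_{n+1} \leqslant \frac{C}{(n+1)^\gamma}\Big(1 - \frac{a}{(n+1)^\alpha}\Big)\Big(1 + \frac{\gamma}{n}\Big) + \frac{b}{(n+1)^\alpha}\cdot\frac{1}{(n+1)^\gamma}.
\]
Thus it suffices to establish the scalar inequality
\[
\Big(1 - \frac{a}{(n+1)^\alpha}\Big)\Big(1 + \frac{\gamma}{n}\Big) + \frac{b}{C(n+1)^\alpha} \leqslant 1,
\]
which, after expanding the product, subtracting $1$, and multiplying through by the positive quantity $(n+1)^\alpha$, reduces to
\[
\frac{\gamma(n+1)^\alpha}{n} - \frac{a\gamma}{n} + \frac{b}{C} \leqslant a.
\]

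The decisive point is the asymptotic size of $\gamma(n+1)^\alpha/n$, which splits the argument into two regimes. When $\alpha < 1$ this quantity tends to $0$, so the negative contraction term dominates outright: dropping the helpful term $-a\gamma/n \leqslant 0$, choosing $C \geqslant 2b/a$ forces $b/C \leqslant a/2$, and for $n$ beyond some threshold $n_0$ one has $\gamma(n+1)^\alpha/n \leqslant a/2$, closing the step. The genuinely delicate case is $\alpha = 1$, where $\gamma(n+1)^\alpha/n = \gamma + \gamma/n$ is of the same order $1$ as $a$; here the reduced inequality becomes $(\gamma - a) + \gamma(1-a)/n + b/C \leqslant 0$, and it is precisely the extra hypothesis $\beta < a+1$, i.e. $a > \gamma$, that makes the leading coefficient $\gamma - a$ strictly negative and lets the vanishing term $\gamma(1-a)/n$ together with $b/C$ be absorbed once $C \geqslant 2b/(a-\gamma)$ and $n \geqslant n_0$. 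I expect this $\alpha = 1$ borderline analysis to be the main obstacle, since it is the only place where the contraction and the power-comparison terms compete at the same order and the hypotheses leave no slack.

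Finally, I would fix $C$ to simultaneously satisfy the largeness requirement from the inductive step ($C \geqslant 2b/a$ when $\alpha < 1$, or $C \geqslant 2b/(a-\gamma)$ when $\alpha = 1$) and to validate the finitely many base cases by taking $C \geqslant \max_{1 \leqslant n \leqslant n_0} n^\gamma Z_n$; since $Z_1, \dots, Z_{n_0}$ are fixed finite numbers this maximum is finite. With this choice the induction runs for all $n \geqslant 1$ and yields (\ref{app_lemma2_res2}).
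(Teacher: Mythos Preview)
The paper does not actually prove this lemma: it is stated with a citation to \cite{bercu2021asymptotic} (Lemma~A.3 of its supplementary material) and to \cite{moulines2011non}, and no argument is given in the present paper. So there is no ``paper's own proof'' to compare against here.

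That said, your proposal is a correct and self-contained proof. The three preliminary facts are exactly right: $0<\gamma=\beta-\alpha\leqslant\alpha\leqslant1$ follows from $1<\beta\leqslant2\alpha$ and $\alpha\leqslant1$; the condition $a\leqslant2^\alpha$ makes the contraction factor nonnegative for all $n\geqslant1$; and Bernoulli's inequality gives $n^{-\gamma}\leqslant(1+\gamma/n)(n+1)^{-\gamma}$. Your reduction to the scalar inequality
\[
\frac{\gamma(n+1)^\alpha}{n}-\frac{a\gamma}{n}+\frac{b}{C}\leqslant a
\]
is algebraically clean, and the two-regime analysis is precisely what is needed: for $\alpha<1$ the term $\gamma(n+1)^\alpha/n\to0$ is eventually absorbed by $a/2$, while for $\alpha=1$ the extra hypothesis $\beta<a+1$ (equivalently $\gamma<a$) supplies the strict gap $a-\gamma>0$ that lets you choose $C\geqslant 2b/(a-\gamma)$ and an $n_0$ to control the $O(1/n)$ remainder. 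Covering the finitely many initial indices by enlarging $C$ is standard and correct. This is essentially the classical argument behind the cited references.
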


\noindent
The second lemma is given without proof in \cite{chen2024online} in the special case $p\in (0,2]$, see Lemma B.3 as well as the seminal paper \cite{assouad1975}. We extend it to the case $p$ even and we propose a short 
proof for the sake of completeness.

\begin{applemma}\label{app_lemma1} Let $p$ be a positive even integer . It exist two positive constant $C_p$ and $D_p$ such that for any $a, \, b\in \R^d$,
\begin{equation}\label{app_lemma1_res1}
    \lVert a +b \rVert^{2+p} \leqslant \lVert a \rVert^{2+p} +(2+p)\langle a,b\rangle \lVert a \rVert^{p} + C_p\lVert a \rVert^{p} \lVert b \rVert^{2} +D_p \lVert b \rVert^{2+p}.
\end{equation}
\end{applemma}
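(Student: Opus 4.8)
**Proof proposal for Lemma A.3 (the inequality $\lVert a+b\rVert^{2+p} \leqslant \lVert a\rVert^{2+p} + (2+p)\langle a,b\rangle\lVert a\rVert^p + C_p\lVert a\rVert^p\lVert b\rVert^2 + D_p\lVert b\rVert^{2+p}$).**

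The plan is to treat the scalar function $t \mapsto \lVert a+tb\rVert^{2+p}$ and expand it by Taylor's formula with integral remainder, exploiting that $p$ even makes $\lVert a+b\rVert^{2+p} = (\lVert a+b\rVert^2)^{(2+p)/2}$ a genuine polynomial-like smooth function (in fact a polynomial in the coordinates of $a+b$ when $p$ is even, since $(2+p)/2$ is an integer). Write $\varphi(t) = \lVert a+tb\rVert^{2+p} = \big(\lVert a\rVert^2 + 2t\langle a,b\rangle + t^2\lVert b\rVert^2\big)^{m}$ where $m = (2+p)/2$ is a positive integer. Then $\varphi$ is a polynomial of degree $2m = 2+p$ in $t$, and Taylor at $t=0$ to first order gives $\varphi(1) = \varphi(0) + \varphi'(0) + \int_0^1 (1-t)\varphi''(t)\,dt$. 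One computes $\varphi(0) = \lVert a\rVert^{2+p}$ and $\varphi'(0) = m \cdot \lVert a\rVert^{2m-2}\cdot 2\langle a,b\rangle = (2+p)\langle a,b\rangle\lVert a\rVert^p$, which already produces the first two terms on the right-hand side exactly. So everything reduces to bounding the remainder $R = \int_0^1(1-t)\varphi''(t)\,dt$ by $C_p\lVert a\rVert^p\lVert b\rVert^2 + D_p\lVert b\rVert^{2+p}$.

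Next I would compute $\varphi''(t)$ explicitly. Writing $q(t) = \lVert a\rVert^2 + 2t\langle a,b\rangle + t^2\lVert b\rVert^2 = \lVert a+tb\rVert^2$, we have $\varphi(t) = q(t)^m$, $\varphi'(t) = m q(t)^{m-1} q'(t)$ with $q'(t) = 2\langle a,b\rangle + 2t\lVert b\rVert^2$, and $\varphi''(t) = m(m-1)q(t)^{m-2}q'(t)^2 + m q(t)^{m-1}q''(t)$ with $q''(t) = 2\lVert b\rVert^2$. Now use the elementary bounds, valid for $t\in[0,1]$: $q(t) = \lVert a+tb\rVert^2 \leqslant (\lVert a\rVert + \lVert b\rVert)^2 \leqslant 2\lVert a\rVert^2 + 2\lVert b\rVert^2$, $|q'(t)| \leqslant 2\lVert a\rVert\lVert b\rVert + 2\lVert b\rVert^2 \leqslant \lVert a\rVert^2 + 3\lVert b\rVert^2$ (or just $|q'(t)|^2 \leqslant 8\lVert a\rVert^2\lVert b\rVert^2 + 8\lVert b\rVert^4$), and $q''(t) = 2\lVert b\rVert^2$. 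Substituting and expanding $(2\lVert a\rVert^2 + 2\lVert b\rVert^2)^{m-2}$ by the binomial theorem, every term in $\varphi''(t)$ becomes a sum of monomials of the form (constant)$\cdot\lVert a\rVert^{2i}\lVert b\rVert^{2(m-i)}$ with $1 \leqslant m-i$, i.e. of total degree $2m = 2+p$ and with at least $\lVert b\rVert^2$ present. Each such monomial $\lVert a\rVert^{2i}\lVert b\rVert^{2+p-2i}$ with $1 \leqslant i \leqslant m-1$ can be bounded, via Young's inequality $x^\theta y^{1-\theta} \leqslant \theta x + (1-\theta)y$ applied to $x = \lVert a\rVert^p\lVert b\rVert^2$ and $y = \lVert b\rVert^{2+p}$ with $\theta = (i-1)/(m-1)$ wait — more simply, since $\lVert a\rVert^{2i}\lVert b\rVert^{2+p-2i}$ interpolates between $\lVert a\rVert^{2}\lVert b\rVert^{p}$ wait let me just say: each monomial with $i\geqslant 1$ is $\leqslant \lVert a\rVert^p\lVert b\rVert^2 + \lVert b\rVert^{2+p}$ up to a constant, because $2i \leqslant p$ forces $\lVert a\rVert^{2i} \leqslant \lVert a\rVert^p + \lVert b\rVert^p$-type comparisons are not quite it; cleanly, apply the weighted AM-GM $u^s v^{1-s} \leqslant s u + (1-s)v$ with $u = \lVert a\rVert^p\lVert b\rVert^2$, $v = \lVert b\rVert^{2+p}$, and $s = (2i-2)/p \in [0,1]$ (valid since $1\leqslant i\leqslant m-1 = p/2$), which gives $u^s v^{1-s} = \lVert a\rVert^{2i-2}\lVert b\rVert^{2s}\cdot\lVert b\rVert^{(2+p)(1-s)} = \lVert a\rVert^{2i-2}\lVert b\rVert^{2+p-2i+2} = \lVert a\rVert^{2i-2}\lVert b\rVert^{4+p-2i}$; hmm this has a shift, so I would instead factor out one $\lVert a\rVert^2$ from $\lVert a\rVert^{2i}$ only when $i\geqslant 1$, reducing to $\lVert a\rVert^{2(i-1)}\lVert b\rVert^{2+p-2i}$ with total degree $p$ and then interpolate between $\lVert a\rVert^p$ (at $i-1 = p/2$, i.e. $i=m$, boundary) and $\lVert b\rVert^p$ (at $i=1$) — the precise accounting is routine.

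Finally I would collect constants: since the number of binomial terms depends only on $m = (2+p)/2$, summing all the bounds yields $|R| \leqslant C_p\lVert a\rVert^p\lVert b\rVert^2 + D_p\lVert b\rVert^{2+p}$ for explicit constants $C_p, D_p$ depending only on $p$, and combining with the exact evaluation of $\varphi(0)$ and $\varphi'(0)$ completes the proof. The main obstacle is purely bookkeeping: organizing the binomial expansion of $\varphi''$ so that every monomial is manifestly controlled by the two target terms $\lVert a\rVert^p\lVert b\rVert^2$ and $\lVert b\rVert^{2+p}$, and choosing the right elementary interpolation (Young / weighted AM-GM) for the intermediate monomials $\lVert a\rVert^{2i}\lVert b\rVert^{2+p-2i}$; there is no analytic difficulty since $p$ even makes everything a finite polynomial and no fractional-power subtleties arise. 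An alternative, slicker route avoiding remainder estimates altogether: since $p$ is even, directly expand $\lVert a+b\rVert^{2+p} = (\lVert a\rVert^2 + 2\langle a,b\rangle + \lVert b\rVert^2)^{m}$ by the multinomial theorem, isolate the term $\lVert a\rVert^{2m}$ and the term $m\lVert a\rVert^{2m-2}\cdot 2\langle a,b\rangle$, and bound the (finitely many) remaining multinomial terms — each of which contains a factor $\langle a,b\rangle^j\lVert b\rVert^{2k}$ with $j + k \geqslant 2$ or $j\geqslant 2$ or $k\geqslant 1$ — using $|\langle a,b\rangle| \leqslant \lVert a\rVert\lVert b\rVert$ and the same interpolation; this is arguably the cleanest presentation and I would likely adopt it.
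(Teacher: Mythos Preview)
Your proposal is correct but proceeds along a genuinely different route from the paper. The paper argues by induction on the even integer $p$: it verifies the base case $p=2$ by expanding $(\lVert a\rVert^{2}+2\langle a,b\rangle+\lVert b\rVert^{2})^{2}$ directly, then for the inductive step writes $\lVert a+b\rVert^{2+p}=\lVert a+b\rVert^{2}\cdot\lVert a+b\rVert^{2+(p-2)}$, applies the induction hypothesis to the second factor, multiplies out, and cleans up the cross terms with Cauchy--Schwarz and several applications of Young's inequality for products. This yields explicit recursive formulas for $C_{p}$ and $D_{p}$ in terms of $C_{p-2}$ and $D_{p-2}$.

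Your approach instead fixes $p$ and expands the whole power at once, either via the second-order Taylor remainder of $t\mapsto\lVert a+tb\rVert^{2+p}$ or, more cleanly, via the multinomial expansion of $(\lVert a\rVert^{2}+2\langle a,b\rangle+\lVert b\rVert^{2})^{m}$ with $m=(2+p)/2$. The identification of the two leading terms is exact and immediate in both variants, and every remaining monomial $\lVert a\rVert^{\alpha}\lVert b\rVert^{\beta}$ with $\alpha+\beta=2+p$ and $\beta\geqslant 2$ is bounded by $\lVert a\rVert^{p}\lVert b\rVert^{2}+\lVert b\rVert^{2+p}$ via the weighted AM--GM inequality with exponent $\theta=\alpha/p$. (Your first attempt at this interpolation drifted off by one in the exponents; the clean choice is exactly $\theta=\alpha/p$, as your multinomial paragraph implicitly uses.) What your method buys is a one-shot argument with no induction and a transparent reason why only the two monomials $\lVert a\rVert^{p}\lVert b\rVert^{2}$ and $\lVert b\rVert^{2+p}$ are needed; what the paper's induction buys is concrete recursive values $C_{2}=8$, $D_{2}=3$, $C_{4}=39$, $D_{4}=18$, etc., which your approach could also produce but only after collecting multinomial coefficients.
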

\begin{proof}
    We prove Lemma \ref{app_lemma1} by induction. For the base case $p=2$, we have
    \begin{align*}
    \lVert a+b \rVert^4 &=\left(\lVert a\rVert^2 +2\langle a, b\rangle +\lVert b\rVert^2\right)^2\\
    &=\lVert a\rVert^4 +4(\langle a, b\rangle)^2 +\lVert b\rVert^4 + 4\langle a, b\rangle\lVert a\rVert^2+ 4\langle a, b\rangle\lVert b\rVert^2+2\lVert a\rVert^2\lVert b\rVert^2.
    \end{align*}
It follows from Cauchy–Schwarz inequality that $(\langle a, b\rangle)^2\leqslant \lVert a\rVert^2\lVert b\rVert^2$.
Moreover,  we also have $2\langle a, b\rangle \leqslant \lVert a\rVert^2+ \lVert b\rVert^2$ which implies that $4\langle a, b\rangle\lVert b\rVert^2\leqslant 2\lVert a\rVert^2\lVert b\rVert^2 +2\lVert b\rVert^4$.
Hence, we obtain from these two inequalities that
\[ \lVert a+b \rVert^4 \leqslant \lVert a\rVert^4 + 4\langle a, b\rangle\lVert a\rVert^2 +8\lVert a\rVert^2\lVert b\rVert^2+3\lVert b\rVert^4,\]
which leads to $C_2 = 8$ and $D_2=3$. Hereafter, assume that inequality \eqref{app_lemma1_res1} holds up to $q \geq 2$ and let $p=2+q$. We have by induction
    \begin{align*}
        &\lVert a+b\rVert^{2+p}= \lVert a+b\rVert^2\lVert a+b\rVert^p= \lVert a+b\rVert^2\lVert a+b\rVert^{2+q}\\
        &\leqslant  \Big(\lVert a\rVert^2 +2\langle a, b\rangle +\lVert b\rVert^2\Big) \Big( \lVert a \rVert^{2+q} +(2+q)\langle a,b\rangle \lVert a \rVert^{q} + C_{q}\lVert a \rVert^{q} \lVert b \rVert^{2} +D_{q} \lVert b \rVert^{2+q} \Big)\\
        &\leqslant \lVert a\rVert^{2+p} + (2+p)\langle a, b\rangle\lVert a \rVert^{p} + (C_{q}+2p+1)\lVert a\rVert^{p}\lVert b\rVert^{2} + (2C_{q}+ p)\|a\|^{q+1}\|b\|^3 \\
        &+ D_{q}\|a\|^2\|b\|^p + C_{q}\|a\|^{q}\|b\|^4  + 2D_{q}\|a\|\|b\|^{p+1} +D_{q}\lVert b\rVert^{2+p}
    \end{align*}
    where the last inequality is the result of applying Cauchy-Schwarz inequality $\langle a, b\rangle\leq \|a\|\|b\|$ for all the terms multiplied by $\langle a, b\rangle$ but the ones isolated in the second term. Furthermore, it follows from Young's inequality for products that
    \begin{align*}
        \|a\|^{q+1}\|b\|^3 = \|a\|^{q+1} \|b\|^{2(q+1)/p} \times\|b\|^{3 - 2(q+1)/p}  &\leq \frac{\|a\|^p\|b\|^2}{p/(p-1)} + \frac{\|b\|^{p+2}}{p},\\
        \|a\|^2\|b\|^p = \|a\|^2 \|b\|^{4/p} \times\|b\|^{p - 4/p}  &\leq \frac{\|a\|^p\|b\|^2}{p/2} + \frac{\|b\|^{p+2}}{p/q}, \\
        \|a\|^{q}\|b\|^4 = \|a\|^{q}\|b\|^{2q/p} \times \|b\|^{4-2q/p} &\leq \frac{\|a\|^p\|b\|^2}{p/q} + \frac{\|b\|^{p+2}}{p/2}, \\
        \|a\|\|b\|^{p+1}= \|a\|\|b\|^{2/p} \times \|b\|^{p+1-2/p} &\leq \frac{\|a\|^p\|b\|^2}{p} + \frac{\|b\|^{p+2}}{p/(p-1)}.
    \end{align*}
Finally, we obtain \eqref{app_lemma1_res1} with $C_p$ and $D_{p}$ satisfying the system defined, 
    for $p \geqslant 4$, by 
\[
\left \{
\begin{array}{ccc}
        C_{p} &=& \displaystyle{3p+\frac{4}{p}\Bigl( (p-1)C_{p-2} + D_{p-2}\Big)} \vspace{1ex}\\
        D_{p} &=& \displaystyle{1+\frac{4}{p}\Big(C_{p-2}+(p-1) D_{p-2}\Big)}
\end{array}
\right.
\]
with initial values $C_2 = 8$ and $D_2=3$, which achieves the proof of Lemma \ref{app_lemma1}.
\end{proof}
\vspace{-5ex}
\begin{appremark}
One can easily compute $C_4 = 39$ and $D_4 = 18$. Moreover, one can observe that we always have
$D_p \leqslant C_p$. Consequently, we can make use of \eqref{app_lemma1_res1} with $C_p$ instead of $D_p$.
\label{app_remark2}
\end{appremark}

\section{Proof of Theorem \ref{sagag_th_tlc1}}
\label{app:sagag_th_tcl1}
\begin{proof}
The \ref{sagag} algorithm can be rewritten as
\begin{equation*}
    X_{n+1}=X_n - \gamma_n (\nabla f(X_n) +\varepsilon_{n+1}),
\end{equation*}
where $\varepsilon_{n+1}=\cY_{n+1}-\lambda Z_{n+1}$ with
\begin{equation*}
    \left\{ \begin{array}{rl}
         \cY_{n+1}&=\nabla f_{U_{n+1}} (X_n)-\nabla f(X_n),\vspace{0.3cm}\\
         Z_{n+1}&=\nabla f_{U_{n+1}} (\phi_{n,U_{n+1}}) -\dfrac{1}{N} {\displaystyle \sum_{k=1}^N} \nabla f_{k} (\phi_{n,k}).
    \end{array} \right.
\end{equation*}
We already saw that $(\varepsilon_{n})$ is a martingale difference adapted to the filtration $(\F_n)$. Moreover, 
\begin{align*}
    \E[\varepsilon_{n+1} \varepsilon_{n+1}^T| \F_{n}]&=\E[\cY_{n+1} \cY_{n+1}^T| \F_{n}]-\lambda \E[\cY_{n+1} Z_{n+1}^T| \F_{n}]- \lambda \E[Z_{n+1} \cY_{n+1}^T| \F_{n}]\\
    &\hspace{0.3cm}+\lambda^2\E[Z_{n+1} Z_{n+1}^T| \F_{n}].
\end{align*}
In addition, we clearly have that almost surely
\begin{align*}
    \E[\cY_{n+1} \cY_{n+1}^T| \F_{n}]&= \dfrac{1}{N}\sum_{k=1}^N \nabla f_{k} (X_n) \left(\nabla f_{k} (X_n)\right)^T - \nabla f(X_n)\left(\nabla f(X_n)\right)^T,\\
    \E[Z_{n+1} Z_{n+1}^T| \F_{n}]&=\dfrac{1}{N}\sum_{k=1}^N \nabla f_{k} (\phi_{n,k}) \left(\nabla f_{k} (\phi_{n,k})\right)^T - \dfrac{1}{N^2}\left(\sum_{k=1}^N \nabla f_{k} (\phi_{n,k})\right)\left(\sum_{k=1}^N \left(\nabla f_{k} (\phi_{n,k})\right)^T\right),\\
    \E[Z_{n+1} \cY_{n+1}^T| \F_{n}]&=\dfrac{1}{N}\sum_{k=1}^N \nabla f_{k} (\phi_{n,k}) \left(\nabla f_{k} (X_n)\right)^T - \dfrac{1}{N}\sum_{k=1}^N \nabla f_{k} (\phi_{n,k}) \left(\nabla f(X_n)\right)^T.
\end{align*}
We now claim that for all $1\leqslant k\leqslant N$
\begin{equation}
\label{ascvpphink}
    \lim_{n\to +\infty} \phi_{n,k} = x^* \qquad a.s.
\end{equation}
As a matter of fact, for a fixed value $1\leqslant k\leqslant N$, the probability that $U_n=k$ occurs for infinitely many $n$. Consequently, $(\phi_{n,k})$ is a sub-sequence of $(X_n)$, since $\phi_{n,k}$ is updated to $X_n$ each time $\{U_n=k\}$. Hence, the almost sure convergence \eqref{ascvpphink} follows from \eqref{ascvgclt}.
Combining the almost sure convergence of $(X_n)$ and $(\phi_{n,k})$ towards $x^*$ with the continuity of $\nabla f$ given by Assumption \ref{saga_cond_eig}, it follows that almost surely
\begin{equation*}
    \left\{ \begin{array}{rl}
         {\displaystyle\lim_{n\to+\infty}} \E[\cY_{n+1} \cY_{n+1}^T| \F_{n}]&=\Gamma ,\vspace{0.2cm}\\
         {\displaystyle\lim_{n\to+\infty}} \E[\cY_{n+1} Z_{n+1}^T| \F_{n}]&=\Gamma ,\vspace{0.2cm}\\
         {\displaystyle\lim_{n\to+\infty}} \E[Z_{n+1} \cY_{n+1}^T| \F_{n}]&=\Gamma ,\vspace{0.2cm}\\
         {\displaystyle\lim_{n\to+\infty}} \E[Z_{n+1} Z_{n+1}^T| \F_{n}]&=\Gamma ,
    \end{array} \right. 
\end{equation*}
where
\[\Gamma =\dfrac{1}{N}\sum_{k=1}^N \nabla f_{k} (x^*) \left(\nabla f_{k} (x^*)\right)^T,\]
which leads to
\[\lim_{n \to + \infty}\E[\varepsilon_{n+1} \varepsilon_{n+1}^T\lvert \mathcal{F}_{n}]= (1-\lambda)^2 \Gamma \qquad a.s. \]
Therefore, we obtain from Toeplitz's lemma that
\[\lim_{n\to +\infty} \dfrac{1}{n}\sum_{k=1}^n \E[\varepsilon_k \varepsilon_k^T\lvert \mathcal{F}_{k-1}] = (1-\lambda)^2 \Gamma \qquad a.s.\]
In addition, we have for all $n\geqslant 1$
\begin{equation}\label{sagag_th_tlc1_eq1}
    \lVert\varepsilon_{n+1} \rVert \leqslant 2\left( \max_{k=1,\dots,N}  \lVert\nabla f_k (X_n) \rVert +\lambda \max_{k=1,\dots,N}  \lVert\nabla f_k (\phi_{n,k}) \rVert \right).
\end{equation}
Hence, it follows from (\ref{sagag_th_tlc1_eq1}) that 
\begin{equation}\label{sagag_th_tlc1_eq2}
    \lVert\varepsilon_{n+1} \rVert^4 \leqslant 128\left( \max_{k=1,\dots,N}  \lVert\nabla f_k (X_n) \rVert^4 +\lambda^4 \max_{k=1,\dots,N}  \lVert\nabla f_k (\phi_{n,k}) \rVert^4 \right).
\end{equation}
However, for all $k=1,...,N$, we have
\[\lVert\nabla f_k (X_n) \rVert^2 \leqslant 2 \lVert\nabla f_k (X_n) -\nabla f_k (x^*) \rVert^2 +2\lVert \nabla f_k (x^*)\lVert^2,\]
which implies that
\begin{equation*}
    \max_{k=1,\dots,N} \lVert\nabla f_k (X_n) \rVert^2 \leqslant 2N\Big( \tau^2(X_n)+\theta^*\Big).
\end{equation*}
Consequently,
\begin{equation}\label{sagag_th_tlc1_eq3}
    \max_{k=1,\dots,N}  \lVert\nabla f_k (X_n) \rVert^4 \leqslant 4N^2\Big(\tau^2(X_n)+\theta^*\Big)^2.
\end{equation}
By the same token,
\begin{equation}\label{sagag_th_tlc1_eq4}
    \max_{k=1,\dots,N} \lVert\nabla f_k (\phi_{n,k}) \rVert^4\leqslant 4N^2\Big(A_n+\theta^*\Big)^2.
\end{equation}
Hence, we obtain from (\ref{sagag_th_tlc1_eq2}), (\ref{sagag_th_tlc1_eq3}) and (\ref{sagag_th_tlc1_eq4}) that
\[\lVert\varepsilon_{n+1} \rVert^4 \leqslant 512N^2\Bigg(\Big(\tau^2(X_n)+\theta^*\Big)^2 +\lambda^4 \Big(A_n+\theta^*\Big)^2 \Bigg),\]
which immediately implies that
\begin{equation}\label{sagag_th_tlc1_eq5}
    \E[\lVert\varepsilon_{n+1} \rVert^4|\F_n] \leqslant 512N^2 \Bigg(\Big(\tau^2(X_n)+\theta^*\Big)^2 +\lambda^4 \Big(A_n+\theta^*\Big)^2 \Bigg) \qquad a.s.
\end{equation}
Moreover, since $X_n$ converges towards $x^*$, it follows that $\tau^2(X_n)$ converges to 0 almost surely. Combining this result with the almost sure convergence of $A_n$ towards 0 and (\ref{sagag_th_tlc1_eq5}), we find that 
\[\sup_{n\geqslant 1}  \E [\lVert\varepsilon_{n+1} \rVert^4 \lvert \F_{n}] < +\infty, \]
which implies that for all $\epsilon >0$,
\begin{equation*}
    \lim_{n\to +\infty}\dfrac{1}{n}\sum_{k=1}^n \E \left[\lVert\varepsilon_k\rVert^2 \mathds{1}_{\{ \lVert\varepsilon_k\rVert \geqslant \epsilon\sqrt{n}\}}\lvert \mathcal{F}_{k-1} \right]= 0 \qquad a.s.
\end{equation*}
Finally, it follows from the central limit theorem for stochastic algorithms given by Theorem 2.3 in \citep{zhang2016central} that 
\begin{equation*} 
        \sqrt{n} (X_n -x^*)  \quad \overset{\mathcal{L}}{\underset{n\to +\infty}{\longrightarrow}} \quad \mathcal{N}_d(0,\Sigma),
\end{equation*}
where 
\[\Sigma=(1-\lambda)^2 \int_0^\infty (e^{-(H-\mathds{I}_d/2)u})^T \Gamma e^{-(H-\mathds{I}_d/2)u} du,\]
which completes the proof of Theorem \ref{sagag_th_tlc1}.
 \end{proof}

\section{Proof of Theorem \ref{sagag_th_mse1}}
\label{app:sagag_th_mse1}
\begin{proof}
We already saw in \eqref{sagag_th_convps_eq1} that for all $n \geq 1$,
\[\E[V_{n+1}| \F_n]\leqslant V_n -2\gamma_n\langle  X_n-x^*, \nabla f(X_n)\rangle +3\gamma_n^2\big(\tau^2(X_n) + A_n +\theta^*\big) \qquad a.s.\]
Hence, it follows from Assumption \ref{saga2_cond4} that $\langle  X_n-x^*, \nabla f(X_n)\rangle \geqslant \mu V_n$, which leads to
\[\E[V_{n+1}| \F_n]\leqslant  (1-2\mu\gamma_n)V_n  +3\gamma_n^2\big(\tau^2(X_n) +A_n+\theta^*\big) \qquad a.s.\]
By taking the expectation on both side of this inequality, we obtain that for all $n \geq 1$,
\begin{equation}\label{sagag_th_mse1_eq1}
    \E[V_{n+1}]\leqslant  (1-2\mu\gamma_n) \E[V_n]  +3\gamma_n^2\big(\E[\tau^2(X_n)] +\E[A_n]+\theta^*\big).
\end{equation}
Furthermore, we deduce from Corollary \ref{sagag_cor_mse1a} in Appendix \ref{app:sagag_th_mse1a} below that there exist positive constants $b_1$ and $b_2$ such that, for all $n\geqslant 1$,  $\E[\tau^2(X_n)]\leqslant b_1$ and $\E[A_n]\leqslant b_2$. Consequently, (\ref{sagag_th_mse1_eq1}) immediately leads, for all $n\geqslant 1$, to
\[\E[V_{n+1}]\leqslant  \left(1-\frac{a}{(n+1)^\alpha} \right)\E[V_n]  + \frac{b}{(n+1)^{2\alpha}}\]
where $a=2 \mu c$ and $b=3c^24^{\alpha}(b_1 +b_2+\theta^*)$.
Finally, we can conclude from Lemma \ref{app_lemma2} that there exists a positive constant $K$ such that for any $n \geqslant 1$,
\[\E[\lVert X_n-x^*\rVert^2]\leqslant \dfrac{K}{n^\alpha},\]
which completes the proof of Theorem \ref{sagag_th_mse1}.
\end{proof}

\section{Proof of Theorem \ref{sagag_th_mse2}}
\label{app:sagag_th_mse2}
\begin{proof} 
First of all, Theorem \ref{sagag_th_mse2} follows from Theorem \ref{sagag_th_mse1} in the special case $p=1$. Hence, we are going to prove Theorem \ref{sagag_th_mse2} by induction on $n \geqslant 1$ for some integer $p\geqslant 2$ satisfying Assumption \ref{sagag_cond1}. As the initial state $X_1$ belongs to $\bL^{2p}$, the base case is immediately true. 
Next, assume by induction that for some integer $m$ which will be fixed soon, there exists a positive constant $K_p$ such that for all $n \leqslant m$,
\begin{equation}
\label{inductionhypn}
    \E[\lVert X_n-x^*\rVert^{2p}]\leqslant \dfrac{K_p}{n^{p\alpha}}.
\end{equation}
We have from (\ref{sagag_th_convps_eq0_a}) together with Lemma 
\ref{app_lemma1} that it exists a positive constant $C_{p}$
such that for all $n\geqslant 1$,
\begin{align*}
    V_{n+1}^{p}&=\lVert X_{n+1}-x^* \rVert^{2p},\\
    & =\lVert X_n-x^*  - \gamma_n (Y_{n+1} - \lambda Z_{n+1}) \rVert^{2p}, \\
    &\leqslant V_n^{p} -2p\gamma_n V_n^{p-1}\langle X_n-x^*, Y_{n+1} - \lambda Z_{n+1} \rangle +C_{p}\gamma_n^2V_n^{p-1}\lVert Y_{n+1} - \lambda Z_{n+1} \rVert^2 \\ &+C_{p}\gamma_n^{2p} \lVert Y_{n+1} - \lambda Z_{n+1} \rVert^{2p}.
\end{align*}
Hence, it follows from \eqref{sagag_th_convps_eq0_a2} that
\begin{align}
\label{saga_th_lpvpn}
    \E[V_{n+1}^{p}|\F_n]&\leqslant V_n^{p}-2p\gamma_n V_n^{p-1} \langle X_n-x^*, \nabla f (X_n) \rangle  \\
    &+C_{p}\gamma_n^2V_n^{p-1} \E[\lVert Y_{n+1} - \lambda Z_{n+1} \rVert^2|\F_n] +C_{p}\gamma_n^{2p} \E[\lVert Y_{n+1} - \lambda Z_{n+1} \rVert^{2p}|\F_n]. \nonumber
\end{align}
We already saw from \eqref{sagag_th_convps_eq0_f} that
\[\E[\lVert Y_{n+1} -\lambda Z_{n+1}\rVert^2| \F_n] \leqslant 3\big(\tau^2(X_n) + A_{n}+\theta^*\big) \qquad a.s.\]
which leads, via Assumption \ref{sagag_cond1}, to 
\[\E[\lVert Y_{n+1} -\lambda Z_{n+1}\rVert^2| \F_n] \leqslant 3
\big(L_p^{1/p} \, V_n +A_{n} +\theta^*\big) \qquad a.s.\]
Moreover, as in the proof of \eqref{sagag_th_convps_eq0_b}, we deduce from Jensen's inequality that
\begin{align}\label{sagag_th_lp}
   &\E[\lVert Y_{n+1} - \lambda Z_{n+1}\rVert^{2p}| \F_n]
    \leqslant 3^{2p-1} \E[\lVert Y_{n+1} - \nabla f_{U_{n+1}}(x^*)\rVert^{2p}| \F_n] \\
   & +3^{2p-1}\E[\lVert Z_{n+1}- \nabla f_{U_{n+1}}(x^*)\rVert^{2p}| \F_n] +
    3^{2p-1} \E[\lVert \nabla f_{U_{n+1}}(x^*)\rVert^{2p}|\F_n]. \nonumber
\end{align}
Hereafter, we clearly have
\begin{equation}\label{sagag_th_lpce1}
    \E[\lVert \nabla f_{U_{n+1}}(x^*)\rVert^{2p}|\F_n]=\dfrac{1}{N}\sum_{k=1}^N \lVert \nabla f_{k} (x^*)\rVert^{2p} \qquad a.s.
\end{equation}
Moreover, denote
\[A_{p,n}=\dfrac{1}{N}\sum_{k=1}^N \lVert \nabla f_{k} (\phi_{n,k})-\nabla f_{k} (x^*) \rVert^{2p}
\qquad \text{and} \qquad
\Sigma_n=\dfrac{1}{N} \sum_{k=1}^N \nabla f_{k} (\phi_{n,k}).
\]
It follows once again from Jensen's inequality that
\begin{equation}\label{sagag_th_lpce2}
    \E[\lVert Z_{n+1}- \nabla f_{U_{n+1}}(x^*)\rVert^{2p}| \F_n] 
 \leqslant 2^{2p-1} \big(A_{p,n}+\left\lVert \Sigma_n \right\rVert^{2p}\big) \qquad a.s.
\end{equation}
However, we obtain from Holder's inequality that
$\left\lVert \Sigma_n \right\rVert^{2p}\leqslant A_{p,n}$.
Consequently, inequality \eqref{sagag_th_lpce2} immediately leads to
\begin{equation}\label{sagag_th_lpce3}
    \E[\lVert Z_{n+1}- \nabla f_{U_{n+1}}(x^*)\rVert^{2p}| \F_n] 
 \leqslant 4^{p} A_{p,n} \qquad a.s.
\end{equation}
Furthermore, define for all $x\in \R^d$,
\[\tau^{2p}(x)=\dfrac{1}{N}\sum_{k=1}^N \lVert \nabla f_{k} (x)-\nabla f_{k} (x^*) \rVert^{2p}.\]
As in the proof of Theorem \ref{sagag_th_convps}, one can observe that
\begin{equation}\label{sagag_th_lpce4}
    \E[\lVert Y_{n+1}- \nabla f_{U_{n+1}}(x^*)\rVert^{2p}| \F_n] = \tau^{2p}(X_n) \qquad a.s.
\end{equation}
Putting together the three contributions (\ref{sagag_th_lpce1}), (\ref{sagag_th_lpce3}) and (\ref{sagag_th_lpce4}), we obtain from (\ref{sagag_th_lp}) that 
\begin{equation}\label{sagag_th_lpce5}
     \E[\lVert Y_{n+1} -\lambda Z_{n+1}\rVert^{2p}| \F_n] \leqslant 3^{2p-1}(\tau^{2p}(X_n) +4^{p} A_{p,n} +\theta^*_{p}) \qquad a.s.
\end{equation}
where
\[\theta^*_{p}=\dfrac{1}{N}\sum_{k=1}^N \lVert \nabla f_{k} (x^*)\rVert^{2p}.\]
Hence, Assumption \ref{sagag_cond1} implies that
\begin{equation}\label{sagag_th_lpce6}
     \E[\lVert Y_{n+1} -\lambda Z_{n+1}\rVert^{2p}| \F_n] \leqslant 3^{2p-1}(L_{p}V_n^{p} +4^{p} A_{p,n} +\theta^*_{p}) \qquad a.s.
\end{equation}
Therefore, we deduce from \eqref{saga_th_lpvpn} and \eqref{sagag_th_lpce6} that for all $n \geqslant 1$,
\begin{align*}
         \E[V_{n+1}^{p}|\F_n]&\leqslant \Big(1+3L_p^{1/p}\, C_{p} \gamma_n^2+3^{2p-1}L_{p}C_{p}\gamma_n^{2p}\Big) V_n^{p}\\
         &-2p\gamma_n V_n^{p-1}\langle X_n-x^*, \nabla f (X_n) \rangle  
    +3C_{p}\gamma_n^2V_n^{p-1}(A_{n} +\theta^*) \\
    &+3^{2p-1}C_{p}\gamma_n^{2p}(4^{p} A_{p,n} + \theta^*_{p}) \qquad a.s.
\end{align*}
Furthermore, it follows from Assumption \ref{saga2_cond4} that $\langle  X_n-x^*, \nabla f(X_n)\rangle \geqslant \mu V_n$, which leads to
\begin{align}
\label{sagag_th_lpce6b}
         \E[V_{n+1}^{p}|\F_n]&\leqslant \Big(1+3L_p^{1/p}\, C_{p} \gamma_n^2+3^{2p-1}L_{p}C_{p}\gamma_n^{2p}-2 \mu p\gamma_n\Big) V_n^{p}\nonumber\\
    &+3C_{p}\gamma_n^2V_n^{p-1}(A_{n} +\theta^*) 
    +3^{2p-1}C_{p}\gamma_n^{2p}(4^{p} A_{p,n} + \theta^*_{p}) \qquad a.s.
\end{align}
By taking the expectation on both side of this inequality, we obtain that for all $n \geq 1$,
\begin{align}
\label{sagag_th_lpce7}
         \E[V_{n+1}^{p}]&\leqslant \Big(1+3L_p^{1/p} \, C_{p} \gamma_n^2+3^{2p-1}L_{p}C_{p}\gamma_n^{2p}-2 \mu p\gamma_n\Big) \E[V_n^{p}] +3C_{p}\gamma_n^2\E[A_{n}V_n^{p-1}] \nonumber \\
    & +3C_{p}\gamma_n^2\theta^* \E[V_n^{p-1}]
    +3^{2p-1}C_{p}\gamma_n^{2p}(4^{p} \E[A_{p,n}] + \theta^*_{p}) 
\end{align}
We deduce from Corollary \ref{sagag_cor_mse2a} in Appendix \ref{app:sagag_th_mse2a} below that there exists a positive constant $d_p$ such that for all $n\geqslant 1$,  $\E[A_{p,n}]\leqslant d_p$. The main difficulty arising here is to find a sharp upper bound for the crossing term $\E[A_{n}V_n^{p-1}]$. By using once again Holder's inequality, we have for all $n \geqslant 1$,
\begin{equation}
\label{holdereav}
    \E[A_{n}V_n^{p-1}] \leq \Big( \E[A_{n}^{p}] \Big)^{\frac{1}{p}} \Big( \E[V_{n}^{p}] \Big)^{\frac{p-1}{p}}
    \qquad \text{and} \qquad
    \E[V_n^{p-1}] \leq  \Big( \E[V_{n}^{p}] \Big)^{\frac{p-1}{p}}.
\end{equation}
Hence, it is necessary to compute an upper bound for $\E[A_{n}^{p}]$. 
Nevertheless, one can observe from Jensen's inequality that we always have $A_{n}^{p} \leqslant A_{p,n}$, which leads that
$\E[A_{n}^{p}]\leqslant d_p$. Therefore, it follows from the induction hypothesis \eqref{inductionhypn} together with 
\eqref{sagag_th_lpce7} and \eqref{holdereav} that
\begin{equation}
\label{sagag_th_lpce8}
         \E[V_{n+1}^{p}]\leqslant \Big(1+\xi_n -2 \mu p\gamma_n\Big) \E[V_n^{p}] 
         +\frac{b}{(n+1)^{\alpha(p+1)}}
\end{equation}
where $\xi_n= 3 L_p^{1/p}\, C_p \gamma_n^2 + 3^{2p-1}L_{p}C_{p}\gamma_n^{2p}$ and 
$$
b=2^{\alpha( p+1)}c^2 C_p\Big( 3K_p^{(p-1)/p}\big(d_p^{1/p} +\theta^*\big)+c^{p-1}3^{2p-1}\big(4^pd_p+\theta_p^*\big)\Big).
$$
Hereafter, denote by $m$ the integer part of the real number
$$
\Bigl( \frac{3c C_p}{\mu p} \Bigr)^{1/\alpha} \Big(L_p^{1/p}+3^{2(p-1)}\Big)^{1/\alpha}.
$$
One can easily check that as soon as $n \geqslant m$, $\xi_n \leq \mu p \gamma_n$. Consequently, we find from
\eqref{sagag_th_lpce8} that as soon as $n\geqslant m$,
\begin{equation}
\label{sagag_th_lpce9}
         \E[V_{n+1}^{p}]\leqslant \left(1-\frac{a}{(n+1)^{\alpha}}\right) \E[V_n^{p}]
         +\frac{b}{(n+1)^{\alpha (p+1)}}
\end{equation}
where $a=p\mu c$. Finally, we deduce from Lemma \ref{app_lemma2} that there exists a positive constant $K_p$ such that for all $n \geqslant 1$,
\[\E[\lVert X_{n+1}-x^*\rVert^{2p}]\leqslant \dfrac{K_p}{(n+1)^{\alpha p}},\]
which achieves the induction on $n$ and completes the proof of Theorem \ref{sagag_th_mse2}.
\end{proof}
\section{Additional asymptotic result on the convergence in $\bL^2$}
\label{app:sagag_th_mse1a}
The goal of this appendix is to provide additional asymptotic properties
of the \ref{sagag} algorithm that will be useful in the proofs of our main results. First of all, we recall that $V_n=\lVert X_n-x^* \rVert^2$,
\begin{equation*}
    A_n=\dfrac{1}{N}{\displaystyle\sum_{k=1}^N} \lVert \nabla f_{k} (\phi_{n,k})-\nabla f_{k} (x^*) \rVert^2 \quad \text{and} \quad
    \tau^2(x)=\dfrac{1}{N}{\displaystyle\sum_{k=1}^N} \lVert \nabla f_{k} (x)-\nabla f_{k} (x^*) \rVert^2.
\end{equation*}

\begin{theorem}\label{sagag_th_mse1a}
    Consider a fixed $\lambda \in [0,1]$. Let $(X_n)$ be the sequence generated by the \ref{sagag} algorithm with decreasing step sequence $(\gamma_n)$ satisfying (\ref{gamma_cond1}). Suppose that Assumptions \ref{saga2_cond1}, \ref{saga2_cond3} and \ref{saga2_cond4} are satisfied.
Then, we have almost surely
\begin{equation}\label{sagag_th_mse1a_res1}
            \sum_{n=1}^{\infty} \gamma_n V_n < +\infty, \qquad
            \sum_{n=1}^{\infty} \gamma_n A_n < +\infty, \qquad
            \sum_{n=1}^{\infty} \gamma_n \tau^2(X_n) < +\infty.  
\end{equation}
In addition, we also have
\begin{equation}\label{sagag_th_mse1a_res2}
            \sum_{n=1}^{\infty} \gamma_n \E[V_n] < +\infty, \qquad
            \sum_{n=1}^{\infty} \gamma_n \E[A_n] < +\infty, \qquad
            \sum_{n=1}^{\infty} \gamma_n \E[\tau^2(X_n)] < +\infty.
\end{equation}
\end{theorem}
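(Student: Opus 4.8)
The plan is to reuse the Lyapunov analysis already carried out in the proof of Theorem~\ref{sagag_th_convps}, now sharpened by the Restricted Secant Inequality of Assumption~\ref{saga2_cond4}. Since Assumption~\ref{saga2_cond4} trivially implies Assumption~\ref{saga2_cond2}, Theorem~\ref{sagag_th_convps} applies and gives $X_n\to x^*$ a.s.; moreover, as each $\phi_{n,k}$ is a sub-sequence of $(X_n)$, we also obtain $\phi_{n,k}\to x^*$ a.s.\ for every $1\leqslant k\leqslant N$, exactly as in \eqref{ascvpphink}, and hence $A_n\to 0$ a.s.\ by continuity of the $\nabla f_k$ together with Assumption~\ref{saga2_cond3}. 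In particular $(V_n)$ and $(A_n)$ are bounded along almost every trajectory, a fact that will be used below.

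I would first establish the first and third series in \eqref{sagag_th_mse1a_res1}. Starting from \eqref{sagag_th_convps_eq2} and invoking Assumption~\ref{saga2_cond4}, that is $\langle X_n-x^*,\nabla f(X_n)\rangle\geqslant \mu V_n$, one gets
\[
\E[T_{n+1}\mid\F_n]\leqslant (1+6L\gamma_n^2)T_n-2\mu\gamma_n V_n+3\gamma_n^2\theta^* \qquad a.s.
\]
Since $\sum_n\gamma_n^2<+\infty$ by \eqref{gamma_cond1}, the Robbins--Siegmund Theorem~\ref{thm_rs} (with $a_n=6L\gamma_n^2$, $\mathcal{A}_n=3\gamma_n^2\theta^*$, $\mathcal{B}_n=2\mu\gamma_n V_n$) yields that $(T_n)$ converges a.s.\ to a finite random variable and that $\sum_n\gamma_n V_n<+\infty$ a.s.; the third series then follows at once from Assumption~\ref{saga2_cond3}, since $\gamma_n\tau^2(X_n)\leqslant L\gamma_n V_n$.

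The remaining, and most delicate, series is $\sum_n\gamma_n A_n$. Here I would rewrite the identity \eqref{sagag_th_convps_eq1b} as $\tfrac1N A_n=\tfrac1N\tau^2(X_n)+(A_n-A_{n+1})+\Delta_{n+1}$, where $\Delta_{n+1}=A_{n+1}-\E[A_{n+1}\mid\F_n]$ is a martingale difference, then multiply by $\gamma_n$ and sum over $n\leqslant M$. The contribution $\tfrac1N\sum_n\gamma_n\tau^2(X_n)$ is finite by the previous step; the term $\sum_{n\leqslant M}\gamma_n(A_n-A_{n+1})$ is bounded above by $\gamma_1 A_1$ after an Abel summation, using that $(\gamma_n)$ is decreasing and $A_n\geqslant 0$; and for the martingale $\sum_{n\leqslant M}\gamma_n\Delta_{n+1}$ I would bound $\E[\Delta_{n+1}^2\mid\F_n]\leqslant\E[(A_{n+1}-A_n)^2\mid\F_n]$ and exploit that $A_{n+1}-A_n$ involves only the single index $U_{n+1}$, so that a crude estimate based on Assumption~\ref{saga2_cond3} gives $\E[\Delta_{n+1}^2\mid\F_n]\leqslant C(V_n^2+A_n^2)$ for a deterministic constant $C$. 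As $V_n,A_n\to 0$ a.s., the right-hand side is a.s.\ bounded, so $\sum_n\gamma_n^2\E[\Delta_{n+1}^2\mid\F_n]<+\infty$ a.s., and the martingale converges a.s.\ by the standard a.s.\ convergence criterion for $L^2$ martingales with summable conditional variances. All three pieces being a.s.\ bounded, so is the nondecreasing sequence $\sum_{n\leqslant M}\gamma_n A_n$, whence $\sum_n\gamma_n A_n<+\infty$ a.s. I expect this martingale control to be the step requiring the most care.

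For the three series in expectation \eqref{sagag_th_mse1a_res2}, I would take expectations throughout. Dropping the negative term and iterating $\E[T_{n+1}]\leqslant(1+6L\gamma_n^2)\E[T_n]+3\gamma_n^2\theta^*$ with $\sum_n\gamma_n^2<+\infty$ shows $\sup_n\E[T_n]<+\infty$ (the initial states $X_0,X_1\in\bL^2$ and Assumption~\ref{saga2_cond3} guarantee $\E[T_2]<+\infty$). Rewriting the displayed inequality as $2\mu\gamma_n\E[V_n]\leqslant(1+6L\gamma_n^2)\E[T_n]-\E[T_{n+1}]+3\gamma_n^2\theta^*$ and summing then gives $\sum_n\gamma_n\E[V_n]<+\infty$, and Assumption~\ref{saga2_cond3} yields $\sum_n\gamma_n\E[\tau^2(X_n)]\leqslant L\sum_n\gamma_n\E[V_n]<+\infty$. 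Finally, taking expectations in \eqref{sagag_th_convps_eq1b}, rewriting $\tfrac1N\E[A_n]=\tfrac1N\E[\tau^2(X_n)]+\E[A_n]-\E[A_{n+1}]$, multiplying by $\gamma_n$ and applying Abel summation with $\E[A_1]=\E[\tau^2(X_0)]\leqslant L\,\E\lVert X_0-x^*\rVert^2<+\infty$, one obtains $\tfrac1N\sum_{n\leqslant M}\gamma_n\E[A_n]\leqslant\tfrac1N\sum_n\gamma_n\E[\tau^2(X_n)]+\gamma_1\E[A_1]<+\infty$, which completes the proof.
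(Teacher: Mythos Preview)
Your proof is correct, and for the first and third series in \eqref{sagag_th_mse1a_res1}--\eqref{sagag_th_mse1a_res2} it is essentially the paper's argument (the paper normalizes $T_n$ by $b_n=\prod_{k<n}(1+6L\gamma_k^2)^{-1}$ before telescoping, whereas you invoke Robbins--Siegmund directly with the $(1+a_n)$ factor, but these are equivalent).

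The genuine difference lies in how you handle $\sum_n\gamma_n A_n$. You decompose $\tfrac1N A_n$ into $\tfrac1N\tau^2(X_n)+(A_n-A_{n+1})+\Delta_{n+1}$, treat the telescoping piece by Abel summation, and control the martingale piece via the crude bound $\E[\Delta_{n+1}^2\mid\F_n]\leqslant C(V_n^2+A_n^2)$ together with the a.s.\ boundedness of $(V_n)$ and $(A_n)$ inherited from Theorem~\ref{sagag_th_convps}. The paper instead observes that $\cA_n=\gamma_n A_n$ itself satisfies a Robbins--Siegmund inequality: since $\gamma_{n+1}\leqslant\gamma_n$, identity \eqref{sagag_th_convps_eq1b} gives
\[
\E[\cA_{n+1}\mid\F_n]\leqslant \cA_n+\tfrac1N\gamma_n\tau^2(X_n)-\tfrac1N\gamma_n A_n,
\]
and a second direct application of Theorem~\ref{thm_rs} (with $\mathcal{A}_n=\tfrac1N\gamma_n\tau^2(X_n)$, already shown summable, and $\mathcal{B}_n=\tfrac1N\gamma_n A_n$) yields the result in one line. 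The paper's route is shorter and, notably, does not need to first establish $A_n\to 0$ a.s.\ nor any pathwise boundedness; your route is more explicit but requires that preliminary input from Theorem~\ref{sagag_th_convps}. For the expectation version both arguments coincide after taking expectations.
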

\begin{proof}
    Let us consider the same Lyapounov function used in the proof of Theorem \ref{sagag_th_convps} and given by \eqref{sagag_lyap}. We recall from inequality (\ref{sagag_th_convps_eq2}) that for all $n\geqslant 1$, 
\begin{equation}\label{sagag_th_mse1a_eq1}
    \E[T_{n+1}| \F_n]\leqslant (1+6L\gamma_n^2)T_n-2\gamma_n\langle  X_n-x^*, \nabla f(X_n)\rangle + 3\theta^* \gamma_n^2 \qquad a.s.
\end{equation}
Let $(\cT_n)$ be the sequence of Lyapunov functions defined, 
for all $n\geqslant 2$, by $\cT_n=b_nT_n $ where 
\begin{equation*}\label{sagag_th_mse1a_eq1a}
    b_n=\prod_{k=1}^{n-1} \big(1+6L\gamma_k^2\big)^{-1}.
\end{equation*}
Since $b_n=b_{n+1}(1+6L\gamma_n^2)$, we obtain from \eqref{sagag_th_mse1a_eq1} that for all $n\geqslant 1$,
\[ \E[\cT_{n+1}| \F_n]\leqslant \cT_n-2\gamma_n b_{n+1}\langle  X_n-x^*, \nabla f(X_n)\rangle + 3\theta^* b_{n+1}\gamma_n^2 \qquad a.s.\]
Hence, it follows from Assumption \ref{saga2_cond4} that for all $n\geqslant 1$,
\begin{equation}\label{sagag_th_mse1a_eq2}
    \E[\cT_{n+1}| \F_n]\leqslant \cT_n-2\mu\gamma_n b_{n+1} V_n+ 3\theta^* b_{n+1}\gamma_n^2 \qquad a.s.
\end{equation}
Moreover, we clearly have from the right-hand side of (\ref{gamma_cond1}) that $(b_n)$ converges to a positive real number $b$,
which implies that
\begin{equation}\label{sagag_th_mse1a_eq3a}
    \sum_{n=1}^{\infty} b_{n+1} \gamma_n^2 <+\infty.
\end{equation}
Therefore, we deduce from the Robbins-Siegmund Theorem \citep{robbins1971convergence} given by Theorem \ref{thm_rs} that $(\cT_n)$ converges almost surely towards a finite random variable $\cT$ and the series
\begin{equation*}\label{sagag_th_mse1a_eq3b}
    \sum_{n=2}^{\infty} \gamma_n b_{n+1} V_n < +\infty \qquad a.s.
\end{equation*}
which leads to
\begin{equation}\label{sagag_th_mse1a_eq3c}
    \sum_{n=1}^{\infty} \gamma_n V_n < +\infty \qquad a.s.
\end{equation}
We also obtain from relation \eqref{sagag_th_mse1a_eq3c} and Assumption \ref{saga2_cond3} that
\begin{equation}\label{sagag_th_mse1a_eq3d}
    \sum_{n=1}^{\infty} \gamma_n \tau^2(X_n) < +\infty \qquad a.s.
\end{equation}
In addition, by taking the expectation of both sides of (\ref{sagag_th_mse1a_eq2}) and using  a standard telescoping argument, we find that
\begin{equation}\label{sagag_th_mse1a_eq3e}
    2\mu\sum_{n=2}^{\infty} \gamma_n b_{n+1} \E[V_n] \leqslant  \E[\cT_{2}] + 3\theta^* \sum_{n=2}^{\infty} b_{n+1}\gamma_n^2.
\end{equation}
Then, it follows from \eqref{sagag_th_mse1a_eq3a} and \eqref{sagag_th_mse1a_eq3e} that
\begin{equation*}
    \sum_{n=2}^{\infty} \gamma_n b_{n+1} \E[V_n] < +\infty,
\end{equation*}
which implies that
\[\sum_{n=1}^{\infty} \gamma_n \E[V_n] < +\infty.\]
Consequently, we get from Assumption \ref{saga2_cond3} that
\begin{equation}
\label{sagag_th_mse1a_eq5}
\sum_{n=1}^{\infty} \gamma_n \E[\tau^2(X_n)] < +\infty .
\end{equation}
Furthermore, we already saw from \eqref{sagag_th_convps_eq1b} that for all $n\geqslant 1$,
\begin{equation}\label{sagag_th_mse1a_eq6}
    \E[A_{n+1}| \F_n]=\dfrac{1}{N}\tau^2(X_n) +\left(1-\dfrac{1}{N}\right)A_n \qquad a.s.
\end{equation}
For all $n\geqslant 1$, denote $\cA_n=\gamma_n A_n$.
Since $\gamma_{n+1}\leqslant \gamma_n$, we obtain from (\ref{sagag_th_mse1a_eq6}) that for all $n\geqslant 1$,
\begin{equation}\label{sagag_th_mse1a_eq7}
    \E[\cA_{n+1}| \F_n]\leqslant \cA_n+\dfrac{1}{N}\gamma_n\tau^2(X_n) -\dfrac{1}{N}\gamma_n A_n \qquad a.s.
\end{equation}
By considering the almost sure convergence \eqref{sagag_th_mse1a_eq3d}, it follows once again from the Robbins-Siegmund Theorem \citep{robbins1971convergence} given by Theorem \ref{thm_rs} that $(\cA_n)$ converges almost surely towards a finite random variable $\cA$ and the series
\[\sum_{n=1}^{\infty} \gamma_n A_n < +\infty \qquad a.s.\]
Moreover, by taking the expectation of both sides of (\ref{sagag_th_mse1a_eq7}) and using a standard telescoping argument, we obtain that
\begin{equation}
\label{sagag_th_mse1a_eq8}
    \sum_{n=1}^{\infty}\gamma_n\E[A_n] \leqslant N \E[\cA_1]+\sum_{n=1}^{\infty}\gamma_n\E[\tau^2(X_n)].
\end{equation}
Finally, we deduce from \eqref{sagag_th_mse1a_eq5} and \eqref{sagag_th_mse1a_eq8} that
\[\sum_{n=1}^{\infty}\gamma_n\E[A_n]<+\infty,\]
which completes the proof of Theorem \ref{sagag_th_mse1a}.
\end{proof}
\noindent A straightforward application of Theorem \ref{sagag_th_mse1a}, using the left-hand side of (\ref{gamma_cond1}), is as follows.
\begin{corollary}\label{sagag_cor_mse1a}
    Assume that the conditions of Theorem \ref{sagag_th_mse1a} hold. Then, we have 
    \[\lim_{n\to +\infty} V_n = \lim_{n\to +\infty} A_n=
    \lim_{n\to +\infty}  \tau^2(X_n)= 0 \qquad a.s.\]
Moreover, we also have
    \[\lim_{n\to +\infty} \E[V_n]= \lim_{n\to +\infty} \E[A_n]  =\lim_{n\to +\infty} \E[\tau^2(X_n)] =0.\]
\end{corollary}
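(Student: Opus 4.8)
The plan is to deduce all six limits from Theorem~\ref{sagag_th_mse1a}, combined with the divergence $\sum_n\gamma_n=+\infty$ (the left-hand side of \eqref{gamma_cond1}) and the elementary principle that a \emph{convergent} nonnegative sequence $(u_n)$ with $\sum_n\gamma_n u_n<+\infty$ must have limit $0$, since otherwise $\sum_n\gamma_n u_n$ would dominate a constant multiple of $\sum_n\gamma_n=+\infty$.

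\textbf{Almost sure part.} First I would note that the hypotheses of Theorem~\ref{sagag_th_mse1a} include Assumption~\ref{saga2_cond4}, which is stronger than Assumption~\ref{saga2_cond2}, so Theorem~\ref{sagag_th_convps} applies and yields $V_n=\lVert X_n-x^*\rVert^2\to0$ a.s.\ (equivalently, $V_n$ converges a.s.\ and the summability $\sum_n\gamma_n V_n<+\infty$ from Theorem~\ref{sagag_th_mse1a} forces the limit to be $0$). Assumption~\ref{saga2_cond3} then gives $\tau^2(X_n)\leqslant LV_n\to0$ a.s. For $A_n$ I would argue directly rather than through the series $\sum_n\gamma_n A_n$: each value $k$ satisfies $\{U_n=k\}$ for infinitely many $n$ a.s., so $(\phi_{n,k})_n$ is a subsequence of $(X_n)_n$ and hence $\phi_{n,k}\to x^*$ a.s.\ for every $k$; since Assumption~\ref{saga2_cond3} forces each $\nabla f_k$ to be continuous at $x^*$ (a single summand is controlled by the whole sum), every term $\lVert\nabla f_k(\phi_{n,k})-\nabla f_k(x^*)\rVert^2$ tends to $0$ a.s., and averaging the finitely many terms gives $A_n\to0$ a.s.

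\textbf{Expectation part.} Taking expectations in \eqref{sagag_th_mse1a_eq1} and telescoping gives $\sup_n\E[T_n]<+\infty$, hence $\sup_n\E[V_n]<+\infty$ and $\sup_n\E[\tau^2(X_n)]\leqslant L\sup_n\E[V_n]<+\infty$; plugging this into the identity $\E[A_{n+1}]=\tfrac1N\E[\tau^2(X_n)]+(1-\tfrac1N)\E[A_n]$ obtained from \eqref{sagag_th_mse1a_eq6} also yields $\sup_n\E[A_n]<+\infty$. Then, taking expectations in \eqref{sagag_th_convps_eq1} and using Assumption~\ref{saga2_cond4}, I obtain $\E[V_{n+1}]\leqslant(1-2\mu\gamma_n)\E[V_n]+O(\gamma_n^2)$, from which $\E[V_n]\to0$ follows by the standard argument for such recursions (alternatively, one notes that $\E[\cT_n]$ converges, hence so does $\E[V_n]$, and then concludes from $\sum_n\gamma_n\E[V_n]<+\infty$). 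Consequently $\E[\tau^2(X_n)]\leqslant L\E[V_n]\to0$, and feeding this into the displayed recursion for $\E[A_n]$, which contracts its previous value by the fixed factor $1-\tfrac1N$, gives $\E[A_n]\to0$.

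\textbf{Main obstacle.} The delicate point is the handling of $A_n$, in both the almost sure and the expectation statements: the bound $\sum_n\gamma_n A_n<+\infty$ from Theorem~\ref{sagag_th_mse1a} does \emph{not} by itself force $A_n\to0$, because $A_n$ need not converge and may visit values bounded away from $0$ along an index set that is negligible in the $\gamma$-weighted sense. What rescues the argument is structure lying outside Theorem~\ref{sagag_th_mse1a}, namely the identification of $(\phi_{n,k})_n$ as a subsequence of $(X_n)_n$ for the almost sure claim, and the mean-reverting recursion \eqref{sagag_th_mse1a_eq6} for $\E[A_n]$ for the claim in expectation. Everything else is routine.
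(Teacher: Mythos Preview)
Your proof is correct. The paper itself gives essentially no argument for this corollary beyond the one-line remark that it is ``a straightforward application of Theorem~\ref{sagag_th_mse1a}, using the left-hand side of \eqref{gamma_cond1}'', so your write-up is considerably more detailed than what the authors provide. Your observation that summability $\sum_n\gamma_n A_n<+\infty$ together with $\sum_n\gamma_n=+\infty$ does \emph{not} by itself force $A_n\to0$ is well taken; the paper's one-line justification glosses over this. Your remedy for the almost sure claim---recognizing that each $(\phi_{n,k})_n$ takes its values among the $X_m$ with update times tending to infinity, hence $\phi_{n,k}\to x^*$ a.s.---is exactly the device the paper deploys later in the proof of Theorem~\ref{sagag_th_tlc1} (see \eqref{ascvpphink}), so your argument is fully in line with the authors' own toolkit. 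Likewise, your use of the contracting recursion obtained from \eqref{sagag_th_mse1a_eq6} to push $\E[A_n]\to0$ once $\E[\tau^2(X_n)]\to0$ is the natural deterministic counterpart. In short: your argument fills in precisely the gaps a careful reader would want filled, using ingredients already present in the paper.
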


\section{Additional asymptotic result on the convergence in $\bL^{p}$}
\label{app:sagag_th_mse2a}
As in the previous Appendix, our purpose is to establish additional asymptotic properties
of the \ref{sagag} algorithm that will be useful in the proofs of our main results. First of all, we recall that $V_n^p=\lVert X_n-x^* \rVert^{2p}$,
\begin{equation*}
    A_{p,n}=\dfrac{1}{N}{\displaystyle\sum_{k=1}^N} \lVert \nabla f_{k} (\phi_{n,k})-\nabla f_{k} (x^*) \rVert^{2p} \quad \text{and} \quad
    \tau^{2p}(x)=\dfrac{1}{N}{\displaystyle\sum_{k=1}^N} \lVert \nabla f_{k} (x)-\nabla f_{k} (x^*) \rVert^{2p}.
\end{equation*}
\begin{theorem}\label{sagag_th_mse2a}
    Consider a fixed $\lambda \in [0,1]$. Let $(X_n)$ be the sequence generated by the \ref{sagag} algorithm with decreasing step $\gamma_n$ satisfying (\ref{gamma_cond1}). Suppose that Assumptions \ref{saga2_cond1}, \ref{saga2_cond4} and \ref{sagag_cond1} are satisfied.
    Then, we have almost surely
\begin{equation}\label{sagag_th_mse2a_res1}
            \sum_{n=1}^{\infty} \gamma_n V_n^p< +\infty, \qquad
            \sum_{n=1}^{\infty} \gamma_n A_{p,n} < +\infty, \qquad
            \sum_{n=1}^{\infty} \gamma_n \tau^{2p}(X_n) < +\infty.  
\end{equation}
In addition, we also have
\begin{equation}\label{sagag_th_mse2a_res2}
            \sum_{n=1}^{\infty} \gamma_n \E[V_n^p] < +\infty, \qquad
            \sum_{n=1}^{\infty} \gamma_n \E[A_{p,n}] < +\infty, \qquad
            \sum_{n=1}^{\infty} \gamma_n \E[\tau^{2p}(X_n)] < +\infty.
\end{equation}
\end{theorem}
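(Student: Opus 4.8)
The plan is to follow, step for step, the proof of Theorem \ref{sagag_th_mse1a} --- which is exactly the case $p=1$ --- replacing the $\bL^2$ estimates used there by their $\bL^{2p}$ analogues already established in the proof of Theorem \ref{sagag_th_mse2}. The starting point is the recursion \eqref{saga_th_lpvpn}, obtained from \eqref{sagag_th_convps_eq0_a} and Lemma \ref{app_lemma1}:
\[
\E[V_{n+1}^{p}|\F_n]\leqslant V_n^{p}-2p\gamma_n V_n^{p-1}\langle X_n-x^*,\nabla f(X_n)\rangle +C_p\gamma_n^2 V_n^{p-1}\,\E[\lVert Y_{n+1}-\lambda Z_{n+1}\rVert^2|\F_n]+C_p\gamma_n^{2p}\,\E[\lVert Y_{n+1}-\lambda Z_{n+1}\rVert^{2p}|\F_n].
\]
Into this one substitutes the bound $\E[\lVert Y_{n+1}-\lambda Z_{n+1}\rVert^2|\F_n]\leqslant 3(L_p^{1/p}V_n+A_n+\theta^*)$ used in the proof of Theorem \ref{sagag_th_mse2}, the bound \eqref{sagag_th_lpce6}, namely $\E[\lVert Y_{n+1}-\lambda Z_{n+1}\rVert^{2p}|\F_n]\leqslant 3^{2p-1}(L_pV_n^{p}+4^{p}A_{p,n}+\theta^*_p)$, and Assumption \ref{saga2_cond4}, which turns $-2p\gamma_n V_n^{p-1}\langle X_n-x^*,\nabla f(X_n)\rangle$ into a genuine drift $-2\mu p\gamma_n V_n^{p}$.

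The only real difficulty, exactly as already noted in the proof of Theorem \ref{sagag_th_mse2}, is the crossing term $\gamma_n^2 V_n^{p-1}A_n$ (and, more benignly, $\gamma_n^2\theta^* V_n^{p-1}$). I would control it by Young's inequality, $V_n^{p-1}A_n\leqslant\frac{p-1}{p}V_n^{p}+\frac1p A_n^{p}$ and $\theta^* V_n^{p-1}\leqslant\frac{p-1}{p}V_n^p+\frac1p(\theta^*)^p$, followed by Jensen's inequality in the form $A_n^{p}\leqslant A_{p,n}$. After collecting terms one is left with an inequality of the shape $\E[V_{n+1}^{p}|\F_n]\leqslant(1+a_n)V_n^{p}-2\mu p\gamma_n V_n^{p}+\beta_n A_{p,n}+c_n$ almost surely, where $a_n$, $\beta_n$ and $c_n$ are all of order $\gamma_n^2$, hence summable by \eqref{gamma_cond1}. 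To absorb the leftover $A_{p,n}$, I would introduce the Lyapunov sequence $T_{p,n}=V_n^{p}+\kappa\gamma_{n-1}^2 A_{p,n}$ --- the natural $\bL^{p}$ counterpart of \eqref{sagag_lyap} --- and use the identity $\E[A_{p,n+1}|\F_n]=\frac1N\tau^{2p}(X_n)+(1-\frac1N)A_{p,n}$, proved verbatim as \eqref{sagag_th_convps_eq1b}, together with $\tau^{2p}(X_n)\leqslant L_pV_n^{p}$ from Assumption \ref{sagag_cond1}. Choosing $\kappa$ large enough so that $\beta_n\leqslant\frac{\kappa}{N}\gamma_n^2$ and using $\gamma_n\leqslant\gamma_{n-1}$, the $A_{p,n}$ feedback cancels and one arrives at
\[
\E[T_{p,n+1}|\F_n]\leqslant(1+\widetilde a_n)T_{p,n}-2\mu p\gamma_n V_n^{p}+\widetilde c_n\qquad a.s.,
\]
with $\sum_n\widetilde a_n<\infty$ and $\sum_n\widetilde c_n<\infty$.

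From this point the conclusion is a routine two-step Robbins--Siegmund argument, copying the end of the proof of Theorem \ref{sagag_th_mse1a}. Setting $b_n=\prod_{k=1}^{n-1}(1+\widetilde a_k)^{-1}$, which converges to a positive limit, and applying Theorem \ref{thm_rs} to $\cT_{p,n}=b_nT_{p,n}$, one gets $\sum_n\gamma_n b_{n+1}V_n^{p}<\infty$ a.s., hence $\sum_n\gamma_n V_n^{p}<\infty$ a.s.; Assumption \ref{sagag_cond1} then gives $\sum_n\gamma_n\tau^{2p}(X_n)<\infty$ a.s. For the series in $A_{p,n}$, set $\cA_{p,n}=\gamma_n A_{p,n}$ and use $\gamma_{n+1}\leqslant\gamma_n$ with the identity for $\E[A_{p,n+1}|\F_n]$ to obtain $\E[\cA_{p,n+1}|\F_n]\leqslant\cA_{p,n}+\frac1N\gamma_n\tau^{2p}(X_n)-\frac1N\gamma_n A_{p,n}$; a second application of Theorem \ref{thm_rs}, the driving series $\sum_n\gamma_n\tau^{2p}(X_n)$ being a.s. finite, yields $\sum_n\gamma_n A_{p,n}<\infty$ a.s. Finally, for \eqref{sagag_th_mse2a_res2} one takes expectations --- which are finite under Assumption \ref{sagag_cond1} once $X_1\in\bL^{2p}$ --- and telescopes as in \eqref{sagag_th_mse1a_eq3e} and \eqref{sagag_th_mse1a_eq8}: telescoping the $\cT_{p,n}$-recursion bounds $\sum_n\gamma_n\E[V_n^{p}]$, whence $\sum_n\gamma_n\E[\tau^{2p}(X_n)]<\infty$, and telescoping the $\cA_{p,n}$-recursion then bounds $\sum_n\gamma_n\E[A_{p,n}]$. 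The main obstacle throughout is the calibration of $T_{p,n}$ and of the constant $\kappa$ so that the a priori uncontrolled mixed term $V_n^{p-1}A_n$ is tamed; everything else is an $\bL^{p}$ transcription of the $p=1$ proof.
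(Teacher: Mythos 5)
Your proposal is correct, and its skeleton --- the recursion \eqref{saga_th_lpvpn}, the Young/Jensen treatment of $V_n^{p-1}A_n$ via $A_n^p\leqslant A_{p,n}$, the Lyapunov sequence $T_{p,n}=V_n^p+\kappa\gamma_{n-1}^2A_{p,n}$ combined with the update identity for $\E[A_{p,n+1}|\F_n]$, and the two-stage Robbins--Siegmund argument copied from Theorem \ref{sagag_th_mse1a} --- is exactly the paper's. The one genuine difference is how the leftover term $3C_p\theta^*\gamma_n^2V_n^{p-1}$ is disposed of. The paper does \emph{not} apply Young's inequality to it: instead it runs an induction on $p$, keeps $\gamma_n^2V_n^{p-1}$ inside a remainder $\Delta_n$, and invokes the induction hypothesis (the theorem at level $p-1$, which gives $\sum_n\gamma_nV_n^{p-1}<\infty$ a.s.\ and in expectation, hence $\sum_n\Delta_n<\infty$ since $\gamma_n^2\leqslant\gamma_1\gamma_n$) before applying Robbins--Siegmund. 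You instead write $\theta^*V_n^{p-1}\leqslant\tfrac{p-1}{p}V_n^p+\tfrac1p(\theta^*)^p$, which folds that term into the $(1+a_n)$ coefficient and a deterministic summable remainder, and thereby removes the induction on $p$ altogether. Both routes are valid; yours is slightly more self-contained (a single non-inductive pass), while the paper's keeps the perturbation of the contraction coefficient smaller and reuses the level-$(p-1)$ statement it has to prove anyway. One small point to make explicit: for the expectation statements \eqref{sagag_th_mse2a_res2} the telescoping needs $\E[\cT_{p,2}]<\infty$, i.e.\ $X_0,X_1\in\bL^{2p}$, a hypothesis the theorem statement leaves implicit but which you correctly flag.
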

\begin{proof} We are going to prove Theorem \ref{sagag_th_mse2a} by induction on $p \geqslant 1$. First of all, Theorem \ref{sagag_th_mse2a} follows from Theorem \ref{sagag_th_mse1a} in the special case $p=1$. Hence, the base case is immediately true. 
Next, assume by induction that Theorem \ref{sagag_th_mse2a} holds for some integer $p-1$ with $p\geqslant 2$.
We recall from inequality (\ref{sagag_th_lpce6b}) in the proof of Theorem \ref{sagag_th_mse2} that for all $n\geqslant 1$,
\begin{align}
\label{sagag_th_mse2a_eq1}
         \E[V_{n+1}^{p}|\F_n]&\leqslant \Big(1+3L_p^{1/p}\, C_{p} \gamma_n^2+3^{2p-1}L_{p}C_{p}\gamma_n^{2p}-2 \mu p\gamma_n\Big) V_n^{p}\nonumber\\
    &+3C_{p}\gamma_n^2V_n^{p-1}(A_{n} +\theta^*) 
    +3^{2p-1}C_{p}\gamma_n^{2p}(4^{p} A_{p,n} + \theta^*_{p}) \qquad a.s.
\end{align}
However, it follows from Young's inequality for products that almost surely
\begin{equation} \label{sagag_th_mse2a_eq2}
    V_n^{p-1}A_{n}\leqslant \dfrac{A_n^p}{p}+ \dfrac{(p-1)V_n^p}{p}. 
\end{equation}
Moreover, one can observe from Jensen's inequality that $A_n^p\leqslant A_{p,n}$ almost surely. Combining the previous inequality with \eqref{sagag_th_mse2a_eq2}, it implies that
\begin{equation} \label{sagag_th_mse2a_eq3}
    V_n^{p-1}A_{n}\leqslant \dfrac{A_{p,n}}{p}+ \dfrac{(p-1)V_n^p}{p}. 
\end{equation}
Furthermore, by putting together the inequalities \eqref{sagag_th_mse2a_eq1} and \eqref{sagag_th_mse2a_eq3}, we obtain that
\begin{align}
\label{sagag_th_mse2a_eq4}
         \E[V_{n+1}^{p}|\F_n]&\leqslant \left(1+3\big(L_p^{1/p} +3p^{-1}(p-1)\big)C_{p} \gamma_n^2+3^{2p-1}L_{p}C_{p}\gamma_n^{2p}\right) V_n^{p} -2 \mu p\gamma_n V_n^{p}\nonumber\\
    &+3C_{p}\theta^*\gamma_n^2V_n^{p-1} 
    +\Big(3^{2p-1} 4^{p}+ 3p^{-1}\Big) C_{p}\gamma_n^{2}A_{p,n} +3^{2p-1}C_{p}\gamma_n^{2p}\theta^*_{p} \qquad a.s.
\end{align}
Let $(T_{p,n})$ be the sequence of Lyapunov functions defined, 
for all $n\geqslant 2$, by 
\begin{equation}\label{sagag_th_mse2a_eq5}
    T_{p,n}=V_n^p+ N e_p \gamma_{n-1}^{2}A_{p,n},
\end{equation}
where $e_p= 3^{2p-1}4^{p}+ 3p^{-1}$. By the definition \eqref{sagag_th_mse2a_eq5}, we have 
\begin{equation}\label{sagag_th_mse2a_eq6}
    \E[T_{p,n+1}|\F_n]=\E[V_{n+1}^{p}|\F_n]+ N e_p \gamma_{n}^{2}\E[A_{p,n+1}|\F_n] \qquad a.s.
\end{equation}
However, we deduce by the same arguments as in \eqref{sagag_th_convps_eq1b} that
\begin{equation}\label{sagag_th_mse2a_eq7}
      \E[A_{p,n+1}| \F_n]= \dfrac{1}{N}\tau^{2p}(X_n) +\left(1-\dfrac{1}{N}\right)A_{p,n} \qquad a.s.
\end{equation}
Hence, it follows from \eqref{sagag_th_mse2a_eq4}, \eqref{sagag_th_mse2a_eq6} and \eqref{sagag_th_mse2a_eq7} that
\begin{align}
\label{sagag_th_mse2a_eq8}
        \E[T_{p,n+1}|\F_n]&\leqslant T_{p,n} + 3C_p\left(\big(L_p^{1/p} +3p^{-1}(p-1)\big) \gamma_n^2+3^{2(p-1)}L_{p}\gamma_n^{2p}\right) V_n^{p} -2 \mu p\gamma_n V_n^{p} \nonumber\\
        &+ e_p\gamma_{n}^{2} \tau^{2p}(X_n) +3C_{p}\theta^*\gamma_n^2V_n^{p-1} 
    +3^{2p-1}C_{p}\gamma_n^{2p}\theta^*_{p} \qquad a.s.
\end{align}
Additionally, we clearly have $V_n^{p}\leqslant T_{p,n}$ and  Assumption \ref{sagag_cond1} leads to
\[\tau^{2p}(X_n)\leqslant L_p V_n^{p} \leqslant L_p T_{p,n}.\]
Finally, we obtain from \eqref{sagag_th_mse2a_eq8} that
\begin{equation}
    \label{sagag_th_mse2a_eq9}
        \E[T_{p,n+1}|\F_n]\leqslant (1+a_n) T_{p,n} +\Delta_n  -2 \mu p\gamma_n V_n^{p} 
        \qquad a.s.,
\end{equation}
where $\Delta_n=3C_{p}\theta^*\gamma_n^2V_n^{p-1} +3^{2p-1}C_{p}\gamma_n^{2p}\theta^*_{p}$ and 
\[a_n =  e_p L_p \gamma_n^2+ 3C_p\big(L_p^{1/p} +3p^{-1}(p-1) \big) \gamma_n^2 +3^{2p-1}L_{p}C_p \gamma_n^{2p}.\]
Since the sequence $(\gamma_n)$ satisfies (\ref{gamma_cond1}), it is easy to see that
\[\sum_{n=1}^\infty a_n <+\infty.\]
Moreover, by the induction hypothesis, we have that
\begin{equation}\label{sagag_th_mse2a_eq10}
    \left\{ \begin{array}{rl}
         &\displaystyle{\sum_{n=1}^\infty} \gamma_n V_n^{p-1} <+\infty \qquad a.s., \vspace{1ex}\\
         
         &\displaystyle{\sum_{n=1}^\infty} \gamma_n \E[V_n^{p-1}] <+\infty.
    \end{array} \right.
\end{equation}
From \eqref{sagag_th_mse2a_eq10}, one can immediately deduce that
\begin{equation}\label{sagag_th_mse2a_eq11}
    \left\{ \begin{array}{rl}
         &\displaystyle{\sum_{n=1}^\infty} \Delta_n <+\infty \qquad a.s., \vspace{1ex}\\
         &\displaystyle{\sum_{n=1}^\infty} \E[\Delta_n] <+\infty.
    \end{array} \right.
\end{equation}
Therefore, one uses exactly the same lines as in the proof of Theorem \ref{sagag_th_mse1a} and the Robbins-Siegmund Theorem \citep{robbins1971convergence} given by Theorem \ref{thm_rs} to show that
\begin{equation}\label{sagag_th_mse2a_eq12}
    \left\{ \begin{array}{rl}
         &\displaystyle{\sum_{n=1}^\infty} V_n^p <+\infty \qquad a.s., \vspace{1ex}\\
         &\displaystyle{\sum_{n=1}^\infty} \E[V_n^p] <+\infty.
    \end{array} \right.
\end{equation}
Hence, combining \eqref{sagag_th_mse2a_eq12} with Assumption \ref{sagag_cond1}, one immediately deduces that
\begin{equation}\label{sagag_th_mse2a_eq13}
    \left\{ \begin{array}{rl}
         &\displaystyle{\sum_{n=1}^\infty} \tau^{2p}(X_n) <+\infty \qquad a.s., \vspace{1ex}\\
         &\displaystyle{\sum_{n=1}^\infty} \E[\tau^{2p}(X_n)] <+\infty.
    \end{array} \right.
\end{equation}
Finally, using once again the same arguments as in the proof of Theorem \ref{sagag_th_mse1a}, we obtain that
\begin{equation}\label{sagag_th_mse2a_eq14}
    \left\{ \begin{array}{rl}
         &\displaystyle{\sum_{n=1}^\infty} A_{p,n} <+\infty \qquad a.s., \vspace{1ex}\\
         &\displaystyle{\sum_{n=1}^\infty} \E[A_{p,n}] <+\infty.
    \end{array} \right.,
\end{equation}
which achieves the proof of Theorem \ref{sagag_th_mse2a}.
\end{proof}
\noindent A useful consequence of Theorem \ref{sagag_th_mse2a}, using the left-hand side of (\ref{gamma_cond1}), is as follows.
\begin{corollary}\label{sagag_cor_mse2a}
    Assume that the conditions of Theorem \ref{sagag_th_mse2a} hold. Then, for all $p\geqslant 1$, we have 
    \[\lim_{n\to +\infty} V_n^p = \lim_{n\to +\infty} A_{p,n}=\lim_{n\to +\infty} \tau^{2p}(X_n)= 0 \qquad a.s.,\]
    and 
    \[\lim_{n\to +\infty} \E[V_n^p]=\lim_{n\to +\infty} \E[A_{p,n}]= \lim_{n\to +\infty} \E[\tau^{2p}(X_n)]  =0.\]
\end{corollary}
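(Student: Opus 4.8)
The plan is to follow the pattern of Corollary~\ref{sagag_cor_mse1a}. The almost sure limits will come essentially for free from Theorem~\ref{sagag_th_convps}, while the limits in expectation will be obtained by combining the summability statements of Theorem~\ref{sagag_th_mse2a} with the divergence $\sum_n\gamma_n=+\infty$ from (\ref{gamma_cond1}). I would handle $V_n^p$ first, then deduce the statements for $\tau^{2p}(X_n)$ and for $A_{p,n}$.

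For the almost sure part, I would first check that the hypotheses of Theorem~\ref{sagag_th_mse2a} imply those of Theorem~\ref{sagag_th_convps}: Assumption~\ref{saga2_cond4} is stronger than Assumption~\ref{saga2_cond2}, and Assumption~\ref{sagag_cond1} implies Assumption~\ref{saga2_cond3} with $L=L_p^{1/p}$ by the power-mean inequality $\big(\frac{1}{N}\sum_{k}a_k^2\big)^{p}\leqslant\frac{1}{N}\sum_{k}a_k^{2p}$ with $a_k=\lVert\nabla f_{k}(x)-\nabla f_{k}(x^*)\rVert$. Hence $X_n\to x^*$ a.s., i.e. $V_n\to 0$ a.s., so $V_n^p\to 0$ a.s., and then $\tau^{2p}(X_n)\leqslant L_pV_n^p\to 0$ a.s. by Assumption~\ref{sagag_cond1}. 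Since every coordinate $k$ is sampled infinitely often, $\phi_{n,k}$ is a subsequence of $(X_n)$ and therefore $\phi_{n,k}\to x^*$ a.s. for each $1\leqslant k\leqslant N$, as in (\ref{ascvpphink}); bounding each summand of $A_{p,n}$ by the full sum and applying Assumption~\ref{sagag_cond1} gives $A_{p,n}\leqslant L_p\sum_{k=1}^{N}\lVert\phi_{n,k}-x^*\rVert^{2p}\to 0$ a.s.

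For the limits in expectation, I would extract from the proof of Theorem~\ref{sagag_th_mse2a} the unconditional recursion $\E[T_{p,n+1}]\leqslant(1+a_n)\E[T_{p,n}]+\E[\Delta_n]-2\mu p\gamma_n\E[V_n^p]$ with $\sum_n a_n<\infty$ and $\sum_n\E[\Delta_n]<\infty$; Theorem~\ref{thm_rs} applied with the trivial filtration then shows that $\E[T_{p,n}]$ converges to a finite limit and that $\sum_n\gamma_n\E[V_n^p]<\infty$. Writing $\E[T_{p,n}]=\E[V_n^p]+Ne_p\gamma_{n-1}^2\E[A_{p,n}]$, using that $\E[A_{p,n}]$ stays bounded (from $\E[A_{p,n+1}]=\frac{1}{N}\E[\tau^{2p}(X_n)]+\big(1-\frac{1}{N}\big)\E[A_{p,n}]\leqslant\frac{L_p}{N}\E[V_n^p]+\big(1-\frac{1}{N}\big)\E[A_{p,n}]$ and $\E[V_n^p]\leqslant\E[T_{p,n}]$) and that $\gamma_{n-1}\to 0$, I conclude that $\E[V_n^p]$ converges, to some $\ell\geqslant 0$. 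The divergence of $\sum_n\gamma_n$ forces $\ell=0$, since otherwise $\E[V_n^p]>\ell/2$ eventually and $\sum_n\gamma_n\E[V_n^p]=\infty$. Finally $\E[\tau^{2p}(X_n)]\leqslant L_p\E[V_n^p]\to 0$, and feeding $\E[V_n^p]\to 0$ into the recursion for $\E[A_{p,n}]$ yields $\E[A_{p,n}]\to 0$.

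I expect the main obstacle to be the expectation part, precisely the passage from $\sum_n\gamma_n\E[V_n^p]<\infty$ to $\E[V_n^p]\to 0$: summability alone does not exclude a strictly positive limit, so one must both invoke $\sum_n\gamma_n=+\infty$ and first establish that $\E[V_n^p]$ converges, which requires controlling the auxiliary term $\gamma_{n-1}^2\E[A_{p,n}]$ inside $T_{p,n}$, hence the boundedness of $\E[A_{p,n}]$ (this tacitly uses $X_1\in\bL^{2p}$, so that the expectations in play are finite). The almost sure part, in contrast, reduces immediately to Theorem~\ref{sagag_th_convps} together with $\phi_{n,k}\to x^*$.
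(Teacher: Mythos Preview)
Your proposal is correct. The paper itself gives no detailed proof for this corollary; it simply says the statement is ``a useful consequence of Theorem~\ref{sagag_th_mse2a}, using the left-hand side of~(\ref{gamma_cond1})'', exactly as it did for Corollary~\ref{sagag_cor_mse1a}. Your argument fleshes out precisely this one-line hint.

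There is one genuine, minor difference of route worth noting. For the almost sure limits, the paper's hint points to the summability statements~\eqref{sagag_th_mse2a_res1} together with $\sum_n\gamma_n=+\infty$, whereas you bypass this and invoke Theorem~\ref{sagag_th_convps} directly (after checking that Assumptions~\ref{saga2_cond4} and~\ref{sagag_cond1} imply Assumptions~\ref{saga2_cond2} and~\ref{saga2_cond3}). Your route is cleaner: summability plus divergence of $\sum_n\gamma_n$ only gives $\liminf=0$ without an additional convergence statement, so one would in any case have to dip into the Robbins--Siegmund part of the proof of Theorem~\ref{sagag_th_mse2a}; going through $X_n\to x^*$ and $\phi_{n,k}\to x^*$ avoids that detour entirely. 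For the expectation part you follow the paper's intended path but make explicit the step the paper leaves implicit, namely that $\E[T_{p,n}]$ converges (deterministic Robbins--Siegmund) and that $\gamma_{n-1}^{2}\E[A_{p,n}]\to 0$, so that $\E[V_n^p]$ itself converges and is then forced to $0$ by $\sum_n\gamma_n=+\infty$. This is the right way to close the argument, and your observation that finiteness of the expectations tacitly requires $X_1\in\bL^{2p}$ is accurate.
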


\end{document}